\newtheorem{proposition}{Proposition}
\newtheorem{theorem}[proposition]{Theorem}
\newtheorem{corollary}[proposition]{Corollary}
\newtheorem{lemma}[proposition]{Lemma}
\newtheorem{assumption}[proposition]{Assumption}
\newtheorem{example}[proposition]{Example}
\newcommand{\Eb}{\mathbb{E}}
\newcommand{\ds}{\text{\sf d}}
\newcommand{\Ds}{\text{\sf D}}
\newcommand{\Ff}{\mathfrak{F}}
\newcommand{\Hc}{\mathcal{H}}
\newcommand{\Lc}{\mathcal{L}}
\newcommand{\Nb}{\mathbb{N}}
\newcommand{\Pc}{\mathcal{P}}
\newcommand{\Rb}{\mathbb{R}}
\newcommand{\Zb}{\mathbb{Z}}
\newcommand{\Uc}{\mathcal{U}}
\newcommand{\Xc}{\mathcal{X}}
\newcommand{\Fft}{\tilde{\mathfrak{F}}}
\newcommand{\betat}{\tilde{\beta}}
\DeclareMathOperator{\intt}{int}
\DeclareMathOperator{\co}{conv}
\begin{document}

\title{Stability and Sample-based Approximations of Composite Stochastic Optimization Problems
\thanks{This work was partially supported by the Office of Naval Research under grant no.~N00014-21-1-2161.}
}
\author[*]{Darinka Dentcheva}
\author[*]{Yang Lin}
\author[$\dag$]{Spiridon Penev}
\affil[*]{Department of Mathematical Sciences, Stevens Institute of Technology, Hoboken, NJ 03070, USA}
\affil[$\dag$]{School of Mathematics and Statistics and UNSW Data Science Hub, UNSW Sydney, Sydney, 2052 NSW, Australia}

\renewcommand\Authands{ and }

\maketitle

\textbf{Abstract: }
Optimization under uncertainty and risk is indispensable in many practical situations. Our paper addresses stability of optimization problems using composite risk functionals which are subjected to measure perturbations. Our main focus is the asymptotic behavior of data-driven formulations with empirical or smoothing estimators such as kernels or wavelets applied to some or to all functions of the compositions.  We analyze the properties of the new estimators and we establish strong law of large numbers, consistency, and bias reduction potential under fairly general assumptions. Our results are germane to risk-averse optimization and to data science in general.

\textbf{Keywords: }
stochastic programming, bias, coherent measures of risk, kernel estimation, wavelet estimation, consistency, strong law of large numbers

\section{Introduction}

Optimization under uncertainty and risk is ubiquitous in practical situations.
Multitude of papers in the area of machine learning, business, engineering, and other areas  address the properties and the numerical approached to optimization under uncertainty and risk. Very frequently, the problem formulation uses observed or simulated data. Most of the existing literature deals with stochastic optimization problems of the following general structure:
\begin{equation}
\label{p:basic-rn}
    \min_{u \in U} \Eb \big[F(u,X)\big].
\end{equation}
We call problems of this type risk-neutral.
Here $X$ is a random vector defined on the probability space $(\Omega,\mathcal{F},\mathbb{P})$ with realizations in $\Xc\subseteq\mathbb{R}^m$ and with a finite $p$ moment, $p\geq 1$. We denote
the set of all $m$-dimensional random vectors defined on $(\Omega,\mathcal{F},\mathbb{P})$ with finite $p$ moments by $\Lc_p\big(\Omega,\mathcal{F},\mathbb{P};\Rb^m\big)$.
In \eqref{p:basic-rn}, $U$ is a nonempty closed subset of $\mathbb{R}^n,$ representing the feasible decisions. The objective function $F:\Rb^n\times\Rb^m\to\Rb$ is assumed to be sufficiently regular for the expectation to be well defined and finite valued for all $u\in U$.

While problems of form \eqref{p:basic-rn} are well investigated, our focus is  placed on objective functions given by composite functionals of the following form:
\begin{equation}
\label{f:rho}
\varrho[u,X] =  \mathbb{E}\left[f_1 \left(u,\mathbb{E}[f_2(u,\mathbb{E}[\ldots f_k(u,\mathbb{E}[f_{k+1}(u,X)],X \right) \right] \ldots,X)],X)],
\end{equation}
where $u$ is the decision vector and the random vector $X$ comprises the random data.
The vector functions $f_j:\Rb^n\times{\mathbb{R}}^{m_j} \times {\mathbb{R}}^m \rightarrow {\mathbb{R}}^{m_{j-1}}$, $j=1,\cdots,k$ with $m_0=1$ and $f_{k+1}:\Rb^n\times{\mathbb{R}}^m \rightarrow {\mathbb{R}}^{m_k}$ are assumed continuous with respect to the first argument. The probability measure induced by $X$ is denoted by $P$ and we assume throughout the paper that the functions $f_j$, $j=1,\dots, k+1$ are $P$- integrable with respect to their last argument.
The motivation for this structure comes from the fact that many coherent measures of risk may be cast in this form (c.f., \cite{dentcheva2017statistical}). Recall that coherent measures of risk are functionals $\varrho: \Lc_p\big((\Omega,\mathcal{F},\mathbb{P});\Rb\big)\rightarrow \Rb\cup\{+\infty\}\cup\{-\infty\}$, which are monotonic with respect to the $\mathbb{P}$-a.s. order, convex, positively homogeneous, and satisfy $\varrho (X+c) = \varrho (X) +c$ for all $X\in \Lc_p((\Omega,\mathcal{F},\mathbb{P});\Rb)$ and all constants $c$ (see, e.g., \cite{PflRom:07,mainbook}.) Of course, we can estimate statistically only law-invariant measures of risk but we still keep the commonly accepted notation of $\varrho[u,X]$ instead of $\varrho[u,P].$ Problems arising in machine learning deal with composite optimization as well (e.g., \cite{Duchi2018,machine2}.) The composite structure allows more general point of view and may be of interest in its own right.

Suppose a sample $X_1, X_2,\dots, X_N$ of the random vector $X$ is available.
The most popular approach to problem \eqref{p:basic-rn} is the sample average approximation (SAA), which suggests to solve  the empirical counterpart of \eqref{p:basic-rn} by minimizing $\frac{1}{N}\sum_{i=1}^N F(u,X_{i})$. It is well-known that the optimal value of the SAA problem
suffers from a downward bias. Statistical inference for sample-based problems using expectations and other statistical risk functionals which are linear in probability or of classical character (e.g., moment estimation) are thoroughly investigated in the literature.

Our goals are to extend the theory of stability of stochastic optimization problems with respect to measure perturbation to the case of composite functions, as well as to address some of the questions arising in sample-based composite functional optimization. First, we establish two results about qualitative stability with respect to general measure perturbation for optimization problems with objectives of form \eqref{f:rho}. Further, we analyze the plug-in estimators and smoothed estimators with particular attention to kernel-based estimators. We identify conditions which allow us to establish consistency and strong law of large numbers for the optimal value and the optimal solutions of the sample-based problems.  We also discuss the possibility to smooth only some parts of the composite functional. Finally, we analyze the bias of the empirical and the smoothed estimators in composite stochastic optimization problems. Special attention is paid to the risk-averse optimization problems, in which the composite functionals represent higher order coherent measures of risk.

While properties of smoothed estimators are widely investigated, their application to composite functionals in sample-based optimization problems brings new issues to the fore. We refer to the following work, which was essential to the developments of our analysis:  \cite{tucl,E-consistency,wied2012consistency,Silverman1978weak,einmahl2005uniform,gine2004weighted}.
The empirical version of an optimization problem with composite objective is analyzed in \cite{dentcheva2017statistical}, where central limit theorems have been established.
Another study addressing compositions of similar type is presented in \cite{ermoliev2013sample}, where the asymptotics of certain specific composite problems are investigated. Related work is presented also in \cite{guigues2012sampling}.
The study \cite{Pang2019} focuses on non-convex problems of composite nature which arise in machine learning and analyzes their consistency.
We refer to \cite{romisch2003stability} and to \cite{pflug2003} for a comprehensive review on the asymptotic behavior of stochastic optimization problems; see also \cite{shapir:00,norkin1992convergence,pflug1998glivenko,dentcheva2013stability}.
Variance and bias reduction are discussed in the context of two-stage problem in \cite{sen2016mitigating}. Kernel estimators applied to the data in the context of stochastic programming are analyzed in \cite{growe1992stochastic,dentcheva2021bias}.
A detailed analysis of stochastic average approximation models and associate statistical inference for sample-based optimization is contained in \cite[Chapter 5]{mainbook}.  Statistical estimation of some measures of risk is discussed in \cite{Belomestny2012,Jones2003,Brazauskas2008,dentcheva2010,dentcheva2011mean,shapiro2013,mainbook,rachev2002quantitative,pflug2010asymptotic}.

In our earlier work \cite{dentcheva2021bias}, we have proposed smooth estimators for the sampled data in optimization problem of form \eqref{p:basic-rn}.  We have shown that the new formulations provide less biased estimation of the optimal value under certain assumptions while the standard error of the estimator remains controlled.  These assumptions are satisfied for many problems arising in statistics and stochastic optimization such as regression models of various types, classification problems, portfolio optimization using average value-st-risk and others.
Our experience with modern risk management problems shows that many of those problems exhibit downward bias, which diminishes very slowly, in fact, slower than in problems of form \eqref{p:basic-rn}.  For composite functionals, the bias could still be significant at large sample sizes. As underestimating the riskiness in risk management might lead to substantial losses, it is of practical interest to reduce this bias. Realistic risk-averse decision problems usually depend on high-dimensional data, as well as high-dimensional decision vector $u$, and therefore, the minimization itself is computationally very demanding. Hence applying bias-reduction methods such as jackknife or bootstrap (\cite{EfronAnnals}) may be numerically very expensive.

Our paper is organized as follows. Section \ref{s:stability} contains stability results for the optimal value and the optimal solutions of problems with objectives of form \eqref{f:rho}, when the probability measure $P$ is subjected to perturbations.
Section \ref{s:empiricalestimators} discusses statistical estimators of composite functionals and of the sample-based optimization problems. Consistency analysis is discussed for the empirical and the smooth estimators, which include kernel-based and wavelet-based estimators.  Section~\ref{s:bias} contains analysis of the bias and provides a comparison of the bias
in empirical sample-based optimization and the bias in optimal value of the kernel-based and wavelet-based composite optimization problems.  Section \ref{s:application}  discussed the application of our results to coherent measures of risk in portfolio optimization. Numerical experiments are reported in Section \ref{s:numerical}.


\section{Stability of optimization problems with composite functionals with respect to measure perturbations}
\label{s:stability}

The main object of our study are the optimization problem of the following form:
\begin{equation}
\label{p:composite}
    \vartheta = \min_{u \in U}\;  \varrho[u, X],
\end{equation}
where $\varrho[u, X]$ is defined in \eqref{f:rho}.
The functions $f_j$, $j=1,\cdots,k$ are assumed continuous with respect to the first two arguments and $f_{k+1}$ is assumed continuous with respect to the first argument.
The set of optimal solutions in \eqref{p:composite} is denoted by $S$, e.g.,
\[
S  = \{ u\in U: \varrho[u, X]= \vartheta \}.
\]
We assume throughout the paper that the set $S$ is non-empty and bounded.
\begin{example}
{\rm
Let the random returns of $m$ securities be gathered in a random vector $X$. Our portfolio is given by a vector $u$ representing the allocation of the available capital $K$. The set $U$ stands for the restrictions
on our potential allocations, e.g.,
\[
U=\{ u\in \Rb^m : \sum_{i=1}^m u_i =K,\; l_i\leq u_i \leq  b_i\},
\]
where $l_i$ and $u_i$ are lower and upper bounds, respectively, for the investment in the $i$-th security.
We optimize a combination of the mean return with its mean-semi-deviation of order $p\ge 1$ or with a higher order inverse measures of risk in order to determine our portfolio.
For a random variable $Y$, representing losses the mean-semi-deviation of order $p\ge 1$ has the form
\[
\varrho_1[Y] = \Eb[Y] + \kappa \Big(\Eb\big[\big(\max\{0,\Eb[Y]-Y\}\big)^p\big]\Big)^{\frac{1}{p}},
\]
where $\kappa \in [0,1]$.
We define $f_i:\Rb^m\times\Rb\times\Rb^m\to\Rb$, $i=1,2,$ and $f_3:\Rb^m\times\Rb^m\to\Rb$ as follows:
\begin{align*}
f_1(u,\eta_1,x) = -\langle u, x\rangle + \kappa\eta_1^{\frac{1}{p}},\quad
f_2(u,\eta_2,x) = \big(\max\{0, \eta_2 -\langle u, x\rangle\}\big)^p,\quad 
f_3(u,x) = -\langle u, x\rangle. 
\end{align*}
Then the portfolio optimization problem has the form
\begin{equation}
\label{port-msd}
\min_{u\in U} \Eb\Big[f_1\Big(u,\Eb\big[f_2\big(u,\Eb[f_3(u,X)], X\big)\big], X\Big)\Big]
\end{equation}
This problem reduces to \eqref{p:basic-rn} for $p=1$ but cannot be represented as an expected value optimization when $p>1$. We note that the problem has a unique solution for $p>1$, and, hence, the assumption about $S$ is satisfied.

Another choice of risk control in portfolio optimization is the use of inverse measures of risk. Those measures have the following structure:
\[
\varrho[Y]=\min_{z \in \mathbb{R}}\left\{ z+\frac{1}{\alpha} [\mathbb{E}(\max(0,Y-z)^q)]^{1/q}\right\}.
\]
In a portfolio optimization problem, we minimize a convex combination of
the higher order risk measure and the negative of the expected portfolio return.
The functions $f_1:\Rb^{m+1}\times\Rb^2\times\Rb^m$ and
$f_2:\Rb^m\times\Rb^m\to\Rb^2$ are defined as follows:
\begin{align*}
&f_1(u,\eta,x)=(1-\kappa)\eta_1+ \kappa \big(u_0+\frac{1}{\alpha}\eta_2^{1/q}\big),\\
&f_2(u,x)=\begin{pmatrix}
-\langle x,u\rangle\\
[\max(0,-\langle x,u\rangle-u_0)]^q
\end{pmatrix},
\end{align*}
where $\eta=(\eta_1,\eta_2).$
Here $\kappa\in (0,1)$ provides the weight of the risk measure in the objective.
The new optimization problem has the form
\begin{equation}
\label{port-hor}
\min_{u\in U} \Eb\big[f_1\big(u, \Eb[f_2(u,z, X )], X\big)\big].
\end{equation}
Problem \eqref{port-hor} also reduces to \eqref{p:basic-rn} for $q=1$ but cannot be represented as an expected value optimization when $q>1$. It also has a unique solution for $q>1.$
$\blacksquare$
}
\end{example}

We shall study stability
of the composite objective, the optimal value, and the optimal solution of problem \eqref{p:composite} when the measure $P$ is subjected to perturbations which may be different at the different levels of nesting. The notation $\Pc(\Xc)$ stands for the set of probability measures on $\Xc.$

For two sets, $A, B\subset\Rb^n$, the one-sided distance of $A$ to $B$ is defined as follows:
\[
\ds(A,B) = \sup_{x\in A} d(x,B) = \sup_{x\in A} \;\inf_{y\in B} \|x-y\|.
\]
The Pompeiu-Hausdorff distance between the sets is defined as
\[
\Ds(A,B) = \max \big\{ \ds (A,B), \ds (B,A)\big\}
\]
The following functions and sets will play a role in our discussion. For a measure $Q\in\Pc(\Xc)$, we define
\begin{equation}
\label{f:barfj-u-mu}
\begin{aligned}
     \bar{f}_j^Q(u,\eta_j)& =\int_{\mathcal{X}}f_j(u,\eta_j,x)\,{Q}(dx), \ \ \ \ j=1,\cdots,k \\
      \bar{f}_{k+1}^Q(u) & =\int_{\mathcal{X}}f_{k+1}(u,x)\,{Q}(dx)
\end{aligned}
\end{equation}
We fix a sufficiently large compact set $\Uc$ such that $S\subset \Uc\subset U.$
Further, we fix compact sets $I_1\subset\Rb^{m_1},\cdots,I_k\subset\Rb^{m_k}$ such that $\bar{f}_{j+1}(\Uc,I_{j+1}) \subset \intt(I_j)$, $j=1,\cdots,k-1$, and $\bar{f}_{k+1}(\Uc) \subset \intt(I_k)$, where $\intt(I_j)$ stands for the interior of $I_j.$ Without loss of generality, we assume that $\Uc$ and $I_j$, $j=1,\dots $ are convex sets.
We define the space:
\begin{equation*}
    \mathcal{H}={\mathcal{C}}_1(\Uc \times I_1) \times {\mathcal{C}}_{m_1}(\Uc \times I_2) \times \cdots \times {\mathcal{C}}_{m_{k-1}}(\Uc \times I_k) \times {\mathcal{C}}_{m_k}(\Uc)
\end{equation*}
where  ${\mathcal{C}}_{m_{j-1}}$ is the space of ${\mathbb{R}}^{m_{j-1}}$-valued continuous function on $\Uc \times I_j$, equipped with the supremum norm. The space $\Hc$ is equipped with the product norm.
We define $I = I_1\times I_2\times\cdots I_k$ and $d=m_0+m_1+\dots+m_k$ (recall $m_0=1$). For all $u\in\Uc$ and for all
$\eta= (\eta_1,\dots, \eta_k)\in I$ with $\eta_j\in I_j$, $j=1,2,\dots,k$, we define
\begin{gather*}
\bar{\mathbf{f}}^{(Q^1..Q^{k+1})}(u,\eta)= (\bar{f}^{Q^1}_1(u,\eta_1),\bar{f}^{Q^2}_2(u,\eta_2),\cdots,\bar{f}_k^{Q^k}(u,\eta_k),\bar{f}_{k+1}^{Q^{k+1}}(u))^{\top}\\
\mathbf{f}(u,\eta,x)= (f_1(u,\eta_1,x),f_2(u,\eta_2,x),\cdots,f_k(u,\eta_k,x),f_{k+1}(u,x))^{\top}\
\end{gather*}
If $Q^j=Q$ for all $j=1,\dots k+1$, we write $\bar{\mathbf{f}}^Q.$\\
Denoting the closed convex hull of $\Xc$ by $\co(\Xc)$,
we define the following set of functions.
\[
 \Ff_0 = \Big\{ f_j(u,\eta_j,\cdot):\co(\Xc)\rightarrow \Rb^{m_{j-1}}, f_{k+1}(u,\cdot): \co(\Xc)\rightarrow \Rb^{m_k}, u\in \Uc, \eta_j\in I_j, j=1,\dots, k \} \\
\]
The set $\Pc(\Xc)$ is equipped with the metric $\beta_0(Q,\tilde{Q})$, defined as follows:
\begin{equation}
\label{beta-distance}
	\beta_0(Q,\tilde{Q}) = \sup_{g\in\Ff_0} \, \Big| \int_\Xc g(x) dQ(x) - \int_\Xc g(x) d\tilde{Q}(x)\Big|
\end{equation}

Additionally, we introduce two other sets of functions.
The set of all Lipschitz-continuous and bounded functions on $\co(\Xc):$
\[
\Ff = \{g:\co(\Xc)\rightarrow \Rb: \big|g(x)-g(x')\big|\leq \|x-x'\|,\;\; \forall x,x'\in\co(\Xc),\;\;
\sup_{x\in\co(\Xc)} \big|g(x)\big|\leq 1\}.
\]
Here $\|x\|$ is the Euclidean norm in $\Rb^m.$
The respective metric $\beta(Q,\tilde{Q})$ on $\Pc(\Xc)$ is defined as follows:
\[
\beta(Q,\tilde{Q}) = \sup_{g\in\Ff} \, \Big| \int_\Xc g(x) dQ(x) - \int_\Xc g(x) d\tilde{Q}(x)\Big|
\]
It is well-known that  $\beta(Q,\tilde{Q})$  metrizes the weak convergence on $\Pc(\Xc)$.

Given functions $w_i:\Rb_+\to \Rb_+$, $i= 1,\dots, d$  such that $\lim_{t\downarrow 0} w_i(t) = w_i(0)=0$, we introduce the class of functions $\Fft$ and the respective metric $\betat$ as follows:
\begin{align*}
 \Fft = \Big\{ &\mathbf{f}_{i}(u,\eta,\cdot):\co(\Xc)\rightarrow \Rb,\quad u\in \Uc,\; \eta\in I,\; i=1,\dots, d,: \\
 &\big|\mathbf{f}_{i}(u,\eta,x)-\mathbf{f}_{i}(u,\eta,x')\big|\leq w_i\big( \|x-x'\|\big),\;\; \forall x,x'\in\co(\Xc),\; i= 1,\dots,d \Big \} \\
\betat(Q,\tilde{Q}) &= \sup_{g\in\Fft} \, \Big| \int_\Xc g(x) Q(dx) - \int_\Xc g(x) \tilde{Q}(dx)\Big|
\end{align*}
We note that $\Fft$ consists of real-valued functions that admit a given modulus of continuity.
Observe that $\Ff\subset\Fft$ entails that every sequence of measures converging with respect to $\betat$ also converges with respect to $\beta$.

The set of all natural numbers is denoted by $\Nb$. For a sequence of measures $\{Q_N^j\}$, $j=1,\dots k+1$, $N\in\Nb$, the approximate (measure-perturbed) problems are defined as follows:
 \begin{equation}
 \label{p:approximate}
 \begin{aligned}
\vartheta^{(Q_N^1..Q_N^{k+1})} &= \min_{u\in U} \varrho^{(Q_N^1..Q_N^{k+1})}[u,X] \quad{where}\\
\varrho^{(Q_N^1..Q_N^{k+1})}[u,X] &= \bar{f}_1^{Q_N^1}\Big(u, \bar{f}^{Q_N^2}_2\big(u,\cdots \bar{f}^{Q_N^k}_k(u, \bar{f}^{Q_N^{k+1}}_{k+1}(u))\cdots\big)\Big),\\
S^{(Q_N^1..Q_N^{k+1})} &= \big\{ u\in U:\; \varrho^{(Q_N^1..Q_N^{k+1})} [u,X] = \vartheta^{(Q_N^1..Q_N^{k+1})} \big\}.
\end{aligned}
\end{equation}
\begin{assumption}
Given a sequence of measures $\{Q_N^j\}$, it holds $S^{(Q_N^1..Q_N^{k+1})}\subset \Uc$ for $N$ large enough.
\end{assumption}

\begin{theorem}
\label{metric-main}
Assume that
the sequences of measures $Q_N^j$, $j=1,\dots k+1$ are such that $\lim _{N\to\infty} \beta_0(Q_N^j, P)=0$ for all $j=1,\dots, k+1$ and let Assumption 1 be satisfied for this approximation sequence.
Then\\
$\varrho^{(Q_N^1..Q_N^{k+1})}[u,X]\xrightarrow[N\to\infty] {}\varrho [u,X]$ for every $u\in \Uc$, $\vartheta^{(Q_N^1..Q_N^{k+1})}\xrightarrow[N\to\infty]{} \vartheta$,
 and  $\ds(S^{(Q_N^1..Q_N^{k+1})},S) \xrightarrow[N\to\infty]{} 0$.
Additionally, if problem \eqref{p:composite} has a unique solution $\hat{u}$, then the Pompeiu–Hausdorff distance $\Ds\Big(S^{(Q_N^1..Q_N^{k+1})},S\Big)$ converges to zero.
\end{theorem}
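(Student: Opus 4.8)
The plan is to prove the three convergence claims in sequence, working from the innermost composition outward, and then deduce the set-convergence statements from a standard stability argument for optimization problems.

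First, I would establish the pointwise convergence $\varrho^{(Q_N^1..Q_N^{k+1})}[u,X]\to\varrho[u,X]$ for fixed $u\in\Uc$ by induction on the level of nesting. The base case concerns the innermost term: since $f_{k+1}(u,\cdot)\in\Ff_0$ componentwise (up to scaling, which does not affect the $\beta_0$-limit being zero), $\beta_0(Q_N^{k+1},P)\to 0$ gives $\bar f_{k+1}^{Q_N^{k+1}}(u)\to\bar f_{k+1}^{P}(u)$; moreover the limit lies in $\intt(I_k)$, so for $N$ large the argument stays in $I_k$. For the inductive step, suppose the inner composite argument $\eta_{j+1,N}(u)\to\eta_{j+1}(u)\in\intt(I_{j+1})$. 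I want to conclude $\bar f_j^{Q_N^j}(u,\eta_{j+1,N}(u))\to\bar f_j^{P}(u,\eta_{j+1}(u))$. Split this as
\[
\bigl|\bar f_j^{Q_N^j}(u,\eta_{j+1,N})-\bar f_j^{P}(u,\eta_{j+1})\bigr|
\le \bigl|\bar f_j^{Q_N^j}(u,\eta_{j+1,N})-\bar f_j^{P}(u,\eta_{j+1,N})\bigr|
 + \bigl|\bar f_j^{P}(u,\eta_{j+1,N})-\bar f_j^{P}(u,\eta_{j+1})\bigr|.
\]
The second term goes to zero because $\bar f_j^P(u,\cdot)$ is continuous on the compact set $I_{j+1}$ (continuity of $f_j$ in the second argument plus dominated convergence, using $P$-integrability). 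The first term is bounded by $\sup_{\eta\in I_{j+1}}|\bar f_j^{Q_N^j}(u,\eta)-\bar f_j^{P}(u,\eta)|$, and since $\{f_j(u,\eta,\cdot):\eta\in I_{j+1}\}\subset\Ff_0$, each such supremand is controlled uniformly in $\eta$ by $\beta_0(Q_N^j,P)\to 0$. This closes the induction and yields the pointwise convergence at level $1$, i.e. the first claim.

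Next, for the optimal-value convergence $\vartheta^{(Q_N^1..Q_N^{k+1})}\to\vartheta$, the natural route is to upgrade the pointwise convergence above to uniform convergence on $\Uc$. Inspecting the induction, every estimate is already uniform in $u\in\Uc$: the $\beta_0$-bounds are suprema over $\Ff_0$, which quantifies over all $u\in\Uc$ simultaneously, and the continuity/compactness arguments can be run with $\Uc\times I_{j+1}$ compact in place of $I_{j+1}$ (the functions $\bar f_j^P$ are jointly continuous there by continuity of $f_j$ in the first two arguments and dominated convergence). Hence $\sup_{u\in\Uc}|\varrho^{(Q_N^1..Q_N^{k+1})}[u,X]-\varrho[u,X]|\to 0$. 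Combined with Assumption~1 ($S^{(Q_N^1..Q_N^{k+1})}\subset\Uc$ eventually) and the standing assumption that $S\subset\Uc$ is nonempty and bounded, the minima over $U$ agree with the minima over $\Uc$ for large $N$, and uniform convergence of the objectives on the compact set $\Uc$ gives $\vartheta^{(Q_N^1..Q_N^{k+1})}\to\vartheta$ by the elementary fact that uniform convergence preserves infima over a fixed set.

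Finally, for $\ds(S^{(Q_N^1..Q_N^{k+1})},S)\to 0$ I would argue by contradiction: if not, there is $\varepsilon>0$ and a subsequence with points $u_N\in S^{(Q_N^1..Q_N^{k+1})}$ at distance $\ge\varepsilon$ from $S$; since $u_N\in\Uc$ compact, pass to a further convergent subsequence $u_N\to u^\ast\in\Uc$, with $d(u^\ast,S)\ge\varepsilon$. Uniform convergence plus continuity of $\varrho[\cdot,X]$ on $\Uc$ (itself a consequence of the joint-continuity argument above) gives $\varrho[u^\ast,X]=\lim\varrho^{(Q_N^1..Q_N^{k+1})}[u_N,X]=\lim\vartheta^{(Q_N^1..Q_N^{k+1})}=\vartheta$, so $u^\ast\in S$, contradicting $d(u^\ast,S)\ge\varepsilon$. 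When $S=\{\hat u\}$ is a singleton, $\ds(S,S^{(Q_N^1..Q_N^{k+1})})\to 0$ follows because any accumulation point of a selection $u_N\in S^{(Q_N^1..Q_N^{k+1})}$ must equal $\hat u$ by the same argument, so $\Ds(S^{(Q_N^1..Q_N^{k+1})},S)\to 0$.

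I expect the main obstacle to be the bookkeeping in the inductive step: one must carefully track that the perturbed inner arguments $\eta_{j+1,N}(u)$ remain in the compact sets $I_{j+1}$ (uniformly in $u$ and for all $N$ large), which is exactly what the choice $\bar f_{j+1}(\Uc,I_{j+1})\subset\intt(I_j)$ is engineered to guarantee once the convergence is known — so there is a mild circularity to untangle by inducting on "argument stays in $I_{j+1}$ \emph{and} converges" jointly. The fact that $\Ff_0$ bundles together all the relevant function families (the $f_j(u,\eta_j,\cdot)$ across all $u,\eta_j$, and $f_{k+1}(u,\cdot)$) is what makes a single hypothesis $\beta_0(Q_N^j,P)\to 0$ suffice at every level.
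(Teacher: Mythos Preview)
Your proposal is correct and covers all the claims. There is a minor indexing slip: the second argument of $f_j$ lives in $I_j$, not $I_{j+1}$, so in the inductive step the relevant supremum and the inclusion $\{f_j(u,\eta,\cdot):\eta\in I_j\}\subset\Ff_0$ should be over $\eta\in I_j$; the ``inner composite argument'' you call $\eta_{j+1,N}(u)$ is the value produced at level $j+1$, which lands in $I_j$. This does not affect the argument.

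The paper's route is somewhat different in packaging. Instead of inducting through the composition level by level, it observes directly that $\beta_0(Q_N^j,P)\to 0$ for all $j$ means the full vector $h^{(Q_N^1..Q_N^{k+1})}=(\bar f_1^{Q_N^1},\dots,\bar f_{k+1}^{Q_N^{k+1}})$ converges to $\bar{\mathbf f}^P$ in the product sup-norm space $\Hc$, since $\Ff_0$ already contains every $f_j(u,\eta_j,\cdot)$. It then introduces the abstract evaluation map $\Psi(u,h)=h_1(u,h_2(u,\dots h_{k+1}(u)\dots))$ on $\Uc\times\Hc$ and invokes a parametric-optimization stability result (Bank, Guddat, Klatte, Kummer, Tammer, \emph{Non-linear parametric optimization}, Theorem~4.2.2) to get continuity of $h\mapsto\min_{u}\Psi(u,h)$ and upper semicontinuity of $h\rightrightarrows S(h)$ in one stroke. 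Your approach replaces that citation by the hands-on subsequence/compactness argument, which is more elementary and self-contained; the paper's approach is shorter once one accepts the external reference and makes the dependence on the function-space topology of $\Hc$ explicit, which is reused in later results.
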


\begin{proof}
We define the function $h: \Uc\times I\to \Rb^d$ as follows.
\begin{equation}
\label{directionh}
h^{(Q_N^1..Q_N^{k+1})}(u,\eta)= \big(\bar{f}_1^{Q_N^1}(u,\eta_1),\bar{f}^{Q_N^2}_2(u,\eta_2),\dots, \bar{f}^{Q_N^k}_k(u,\eta_k), \bar{f}^{Q_N^{k+1}}_{k+1}(u)\big)^{\top}
\end{equation}
We have $\lim_{N\to\infty}\sup_{1\leq j\leq k+1} \beta(Q_N^j,P)=0, $
which implies the uniform (w.r.to $(u,\eta)\in \Uc\times I$) convergence of $h^{(Q_N^1..Q_N^{k+1})}(u,\eta)\xrightarrow[N\to\infty]{}
\bar{\mathbf{f}}^P(u,\eta)$, that is, the convergence of $h^{(Q_N^1..Q_N^{k+1})}(u,\eta)$ in the space $\Hc$.
The continuity of the functions $f_j$, $j=1,\dots k$ with respect to their second argument implies the uniform (w.r.to $u\in \Uc$) convergence of
the composition
\begin{multline*}
          \varrho^{(Q_N^1..Q_N^{k+1})}[u,X]= \bar{f}_1^{Q_N^1}\Big(u,\bar{f}_2^{Q_N^2}\big(u,\cdots \bar{f}_k^{Q_N^k}(u,\bar{f}_{k+1}^{Q_N^{k+1}}(u))\cdots\big)\Big)
            \xrightarrow[N\to\infty]{}\\
            \bar{f}_1^P\Big(u,\bar{f}_2^P\big(u,\cdots \bar{f}_k^P(u,\bar{f}_{k+1}^P(u))\cdots\big)\Big) = \varrho[u,X].
        \end{multline*}
This shows the statement about the convergence of the composite risk functional for any fixed argument.
Due to the assumptions, problems \eqref{p:composite} and \eqref{p:approximate} have a non-empty solution sets for $N$ sufficiently large and moreover the sets $S^{(Q_N^1..Q_N^{k+1})}$ are non-empty.

We define the functional $\Psi:\Uc \times \Hc \to\Rb$ by setting
\[
\Psi(u,h) = h_1\Big(u,h_2\big(u,\cdots h_k(u,h_{k+1}(u))\cdots\big)\Big)
\]
For a given parameter $h\in\Hc$, we consider the optimization problem
\begin{equation}
\label{p:abstract}
\min_{u \in \Uc} {\Psi}(u, h)
\end{equation}
and let $S(h)$ stand for the set of optimal solutions of \eqref{p:abstract}. Under our assumptions, we have
\[
\vartheta= \min_{u \in \Uc} {\Psi}(u, \bar{\mathbf{f}}^P(u,\eta))\;\;\text{ and }\;\;
\vartheta^{(Q_N^1..Q_N^{k+1})}=\min_{u \in \Uc} {\Psi}(u, h^{(Q_N^1..Q_N^{k+1})}(u,\eta)).
\]
We apply \cite[Theorem 4.2.2]{bank1982non} for the parameter value $h=\bar{\mathbf{f}}^P$.
Observe that $\Psi(\cdot, \cdot)$ is continuous by the definition of $\Psi$ and the compactness of $\Uc.$ Then the first two statements of \cite[Theorem 4.2.2]{bank1982non}
imply that the mapping $h\to \min_{u \in U} \Psi(u, h)$ is continuous.
Since $h^{(Q_N^1..Q_N^{k+1})} \xrightarrow[N\to\infty]{} \bar{\mathbf{f}}^P$, we conclude that $\vartheta^{(Q_N^1..Q_N^{k+1})} \xrightarrow[N\to\infty]{} \vartheta$.
Furthermore, statement (3) of \cite[Theorem 4.2.2]{bank1982non} holds as well, implying that the set-valued mapping
$h\rightrightarrows S(h)$ is upper-semicontinuous at $\bar{\mathbf{f}}^P$. This means that $\ds\big(S(h^{(Q_N^1..Q_N^{k+1})}),S(\bar{\mathbf{f}}^P)\big) \xrightarrow[N\to\infty]{} 0$ when $h^{(Q_N^1..Q_N^{k+1})}\xrightarrow[N\to\infty]{}\bar{\mathbf{f}}^P.$ Furthermore, when $S(\bar{\mathbf{f}}^P)$ contains only one element, $\hat{u},$ then
\[
\Ds\big(S(h^{(Q_N^1..Q_N^{k+1})}),S(\bar{\mathbf{f}}^P)\big)=
\sup_{u\in S(h^{(Q_N^1..Q_N^{k+1})})} \|u-\hat{u}\|
= \ds(S^{(Q_N^1..Q_N^{k+1})},S).
\]
Thus, we infer the last claim of the theorem.
 
\end{proof}

 Recall that the convergence $\ds\big(S(h^{(Q_N^1..Q_N^{k+1})}),S(\bar{\mathbf{f}}^P)\big) \xrightarrow[N\to\infty]{} 0$ implies $\lim\sup_{N\to\infty} S(h^{(Q_N^1..Q_N^{k+1})})\subseteq S,$ that is, all accumulation points of sequences $u^N\in S(h^{(Q_N^1..Q_N^{k+1})})$ belong to $S$. Note that all such sequences have accumulation points due to the boundedness of $S.$

\begin{theorem}
\label{metric-Lipschitz}
Assume that the sequence of measures $Q_N^j$, $j=1,\dots k+1$ are such that $Q_N^j\to P$ weakly as $N\to \infty$ and Assumption 1 holds. Suppose one of the following conditions:
\begin{itemize}
	\item[(a)] the functions $f_j(u,\eta_j,\cdot)$, $j=1,\cdots,k$ and $f_{k+1} (u,\cdot)$ belong to $\Ff$ for all $(u,\eta)\in U\times I$;
	\item[(b)] the functions $f_j(u,\eta_j,\cdot)$, $j=1,\cdots,k$ and $f_{k+1} (u,\cdot)$ belong to $\Fft$ and, additionally,
the sequences of measures $Q_N^j$, $j=1,\dots k+1$ satisfy
$\lim_{N\to\infty}\int_\Xc \|x\| \; Q_N^j(dx) = \int_\Xc \|x\| \; P(dx)$ with all integrals being finite.
\end{itemize}
Then the conclusions of Theorem~\ref{metric-main} hold.
\end{theorem}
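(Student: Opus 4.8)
\smallskip
\noindent\emph{Proof plan.} The plan is to reduce both cases to Theorem~\ref{metric-main}. Since Assumption~1 is among the present hypotheses, it is enough to prove $\lim_{N\to\infty}\beta_0(Q_N^j,P)=0$ for every $j=1,\dots,k+1$: this is exactly what forces the convergence $h^{(Q_N^1..Q_N^{k+1})}\to\bar{\mathbf{f}}^P$ in $\Hc$ on which the proof of Theorem~\ref{metric-main} rests, everything after that being structural. The bridge to the two hypotheses is that, interpreted componentwise, condition (a) says every real-valued component of a function in $\Ff_0$ lies in $\Ff$, and condition (b) says it lies in $\Fft$; hence there is a constant $c$ depending only on the dimensions $m_0,\dots,m_k$ with $\beta_0(Q,\tilde{Q})\le c\,\beta(Q,\tilde{Q})$ in case (a) and $\beta_0(Q,\tilde{Q})\le c\,\betat(Q,\tilde{Q})$ in case (b), for all $Q,\tilde{Q}\in\Pc(\Xc)$. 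In case (a) the argument then closes immediately: $\beta$ metrizes weak convergence, so $Q_N^j\to P$ weakly gives $\beta(Q_N^j,P)\to0$, hence $\beta_0(Q_N^j,P)\to0$, and Theorem~\ref{metric-main} applies.

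\smallskip
\noindent In case (b) the substance of the argument is to show that weak convergence together with $\int_\Xc\|x\|\,Q_N^j(dx)\to\int_\Xc\|x\|\,P(dx)<\infty$ upgrades to $\betat(Q_N^j,P)\to0$; the bound above then gives $\beta_0(Q_N^j,P)\to0$ and Theorem~\ref{metric-main} again applies. Fixing $j$ and abbreviating $Q_N=Q_N^j$, I would proceed in three steps. (i)~I record that $\Fft$ is uniformly equicontinuous and of at most linear growth uniformly over the class: replacing the $w_i$ by the single modulus $\omega(s)=\sup\{|g(x)-g(x')|:g\in\Fft,\ \|x-x'\|\le s\}$, which is non-decreasing and, because $\co(\Xc)$ is convex, subadditive, with $\omega(0^+)=0$ and $\omega(1)\le\max_i w_i(1)<\infty$, one gets $\omega(s)\le(s+1)\,\omega(1)$ and then $|g(x)|\le C\,(1+\|x\|)$ for all $g\in\Fft$, the additive constant being controlled by the continuity of the $f_j$ on the compact set $\Uc\times I$. (ii)~I invoke the standard moment-convergence criterion: $Q_N\to P$ weakly together with $\int\|x\|\,dQ_N\to\int\|x\|\,dP<\infty$ implies uniform integrability of $\|\cdot\|$ along $\{Q_N\}$, i.e.\ $\sup_N\int_{\{\|x\|>R\}}\|x\|\,dQ_N\to0$ and $\int_{\{\|x\|>R\}}\|x\|\,dP\to0$ as $R\to\infty$, while Markov's inequality gives $\sup_N Q_N(\{\|x\|>R\})\le A/R$ with $A=\sup_N\int\|x\|\,dQ_N<\infty$. (iii)~I truncate: with a $1$-Lipschitz cutoff $\chi_R:\Rb^m\to[0,1]$ equal to $1$ on $\{\|x\|\le R\}$ and to $0$ on $\{\|x\|\ge R+1\}$, write $g=g\chi_R+g(1-\chi_R)$. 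The family $\{g\chi_R:g\in\Fft\}$ is uniformly bounded and uniformly equicontinuous, and for any such family weak convergence is automatically uniform, $\sup_{g\in\Fft}\bigl|\int g\chi_R\,d(Q_N-P)\bigr|\to0$; I would prove this by partitioning a compact set carrying all but $\varepsilon$ of the mass (available from tightness) into finitely many Borel sets $B_l$ of small diameter with $P$-null boundaries, approximating $\int g\chi_R\,dQ_N$ and $\int g\chi_R\,dP$ by $\sum_l g\chi_R(z_l)\,Q_N(B_l)$ and $\sum_l g\chi_R(z_l)\,P(B_l)$ with an error uniform in $g$, and using $Q_N(B_l)\to P(B_l)$ for the finitely many $l$. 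The tail is dominated, by step (i), by $C\int_{\{\|x\|>R\}}(1+\|x\|)\,dQ_N+C\int_{\{\|x\|>R\}}(1+\|x\|)\,dP$, which by step (ii) is below any prescribed $\varepsilon$ for all $N$ as soon as $R$ is large. Letting first $R\to\infty$ and then $N\to\infty$ gives $\betat(Q_N,P)\to0$.

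\smallskip
\noindent The step I expect to be the main obstacle is (iii) in case (b). Because the members of $\Fft$ are only required to admit a prescribed modulus of continuity and may be unbounded, weak convergence does not transfer to them by itself: one must genuinely use the moment control to discard the tails \emph{uniformly} over $\Fft$, and one must establish, or locate in the literature, that weak convergence is automatically uniform over uniformly bounded, uniformly equicontinuous families of test functions. Case (a), by contrast, is an immediate consequence of the inclusion $\Ff_0\subseteq\Ff$.
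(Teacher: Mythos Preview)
Your proposal is correct and follows essentially the same route as the paper: in case~(a) the reduction to Theorem~\ref{metric-main} via $\Ff_0\subseteq\Ff$ and $\beta$ metrizing weak convergence is identical, and in case~(b) both arguments derive linear growth of $\Fft$ from a common subadditive modulus, use the moment-convergence hypothesis to control tails uniformly, and conclude $\betat(Q_N^j,P)\to0$. The only difference is cosmetic: the paper packages your truncation step~(iii) into the assertion that $\Fft$ is a $P$-uniformity class, whereas you spell out the cutoff $\chi_R$ and the bounded-equicontinuous uniformity argument explicitly.
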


\begin{proof}
We consider the case (a) first. \\
Since $Q_N^j\to P$ weakly, we have $\lim_{N\to\infty}\sup_{1\leq j\leq k+1} \beta(Q_N^j,P)=0. $ Since $\Ff_0\subset\Ff,$
this implies $\lim_{N\to\infty}\sup_{1\leq j\leq k+1} \beta_0(Q_N^j,P)=0. $
The claim follows by Theorem \ref{metric-main}.

Now, we turn to the case of condition (b).

It is well known that if a real-valued function, which is defined on a convex subset of a metric space, admits a modulus of continuity $w$, then $w$ can be selected to be subadditive. Therefore, we shall assume without loss of generality that all functions $w_\ell$, $\ell=1,\dots, d$ are subadditive. We define $\bar{w}(t) = \max_{1\leq \ell \leq d} w_\ell(t)$ for all $t\geq 0,$ which is a common modulus of continuity for all functions in $\Fft.$ The function $\bar{w}(\cdot)$ is subadditive as well. Indeed, for any $0\leq s < t,$ we have
\begin{multline*}
\bar{w}(s+t) = \max_{1\leq \ell \leq d} w_\ell(s+t) \leq \max_{1\leq \ell \leq d} \big( w_\ell(s) + w_\ell(t)\big)
\leq \max_{1\leq \ell \leq d} w_\ell(s) + \max_{1\leq \ell \leq d} w_\ell(t) = \bar{w}(s) +\bar{w}(t).
\end{multline*}
Any subadditive modulus of continuity is continuous and
has a sublinear growth, i.e., positive constants $\alpha$ and $\beta$ exist such that $\bar{w}(t) \leq \alpha t+\beta$ for all $t\geq 0$.
Indeed, continuity follows from subadditivity and the continuity of $w$ at 0. For any positive integer $s$, we have $\bar{w}(s)\leq \bar{w}(s-1) +  \bar{w}(1)\leq \cdots s \bar{w}(1)$. Then any $r\geq 0$ can be represented as a sum of an integer $s$ and a number $\alpha\in [0,1)$.  We obtain
\[
\bar{w}(r)\leq \bar{w}(s) + \bar{w}(\alpha) \leq s \bar{w}(1) + \bar{w}(\alpha)\leq r\bar{w}(1) + \sup_{0\leq t\leq 1} \bar{w}(t),
\]
which shows the claim with $\alpha=\bar{w}(1)\geq 0$  and $\beta= \sup_{0\leq t\leq 1} \bar{w}(t). $
This implies that for all pairs $(j,i)$ with $j=1,\dots k+1, \, i=1,\dots m_{j-1}$ the following relation holds
\begin{multline*}
\int_\Xc |f_{j,i}(u,\eta_j, x)| Q_j(dx) \leq \int_\Xc |f_{j,i}(u,\eta_j, x) - f_{j,i}(u,\eta_j, \bar{x})| Q_j(dx) + f_{j,i}(u,\eta_j, \bar{x})  \\
\leq \int_\Xc \bar{w}(\|x-\bar{x}\|) Q_j(dx) + f_{j,i}(u,\eta_j, \bar{x}) \leq  \int_\Xc \big(\alpha\|x- \bar{x}\| +\beta\big)\;  Q_j(dx) + f_{j,i}(u,\eta_j, \bar{x})\\
 \leq  \alpha\int_\Xc\|x\|  Q_j(dx) + \alpha\|\bar{x}\| + \beta + f_{j,i}(u,\eta_j, \bar{x})<\infty.
\end{multline*}
Here $\bar{x}\in\Xc$ is an arbitrary fixed point.
Analogously, for all $i=1,\dots m_k$
\[
\int_\Xc |f_{k+1}(u,x)| Q_{k+1}(dx)\leq \alpha\int_\Xc \|x\| Q_{k+1}(dx) + \alpha\|\bar{x}\| +\beta + f_{k+1,i}(u,\eta_j, \bar{x}) \leq \infty.
\]
The argument also shows that all functions in $\Fft$ as well as $\bar{w}(\cdot)$ are bounded by the following function:
\[
G(x) = \alpha\|x\| + b,
\]
where $b= \alpha\|\bar{x}\| + \beta + \sup_{u\in\Uc, \eta\in I, i=1,\dots d} \mathbf{f}_{i}(u,\eta,\bar{x})$.
Since $Q^j_N$ converge to $P$ for $j=1,\dots k+1$, the set of measures $\{ P, Q^j_N, N\in\Nb, j=1,\dots k+1\}$ are uniformly tight by \cite[Proposition 9.3.4]{dudley2002real}. Therefore,  for every $\delta >0$, let  $\mathcal{K}_\delta\subset \Rb^m$ be a compact set such that
$P(\mathcal{K}_\delta)\geq 1-\delta$ and $Q^j_N(\mathcal{K}_\delta)\geq 1-\delta$ for all $N\in\Nb$.
This means that
\[
\lim_{r\to\infty}\sup_{j=1,\dots, k+1,\; N\in\Nb} \int_{\|x\|>r } b \; Q_N^j(dx) = 0.
\]
Now, we obtain the following:
\begin{align*}
\lim_{r\to\infty} &\,\sup_{j=1,\dots, k+1,\; N\in\Nb}\sup_{g\in\Fft} \int_{\|x\|>r } | g(x)| \; Q_N^j(dx)\\
&\,\leq \lim_{r\to\infty} \sup_{j=1,\dots, k+1,\; N\in\Nb} \int_{\|x\|>r } \alpha \|x\|  \; Q_N^j(dx) + \lim_{r\to\infty}\sup_{j=1,\dots, k+1,\; N\in\Nb}\int_{\|x\|>r }b \; Q_N^j(dx)\\
 &\,= \lim_{r\to\infty} \sup_{j=1,\dots, k+1,\; N\in\Nb} \int_{\|x\|>r } \alpha\|x\|  \; Q_N^j(dx)
\end{align*}
Since $\lim_{N\to\infty}\int_\Xc \|x\| \; Q_N^j(dx) = \int_\Xc \|x\| \; P(dx)$ by assumption and $\lim_{N\to\infty}\int_{\|x\|\leq r } \|x\|  \; Q_N^j(dx)= \int_{\|x\|\leq r } \|x\|  \; P(dx)$ by the weak convergence of measures, we obtain 
\[
\lim_{N\to\infty}\int_{\|x\|>r } \|x\| \; Q_N^j(dx) = \int_{\|x\|>r } \|x\| \; P(dx)
\] for all $r$ such that $P(\|x\| = r) =0. $ Since $\lim_{r\to\infty} \int_{\|x\|>r } \alpha\|x\| \; P(dx) =0,$ we infer that $N_0\in\Nb$ exists such that
$\lim_{r\to\infty} \sup_{j=1,\dots, k+1,\; N>N_0} \int_{\|x\|>r } \alpha \|x\|  \; Q_N^j(dx)= 0.$ This implies
\[
\lim_{r\to\infty} \sup_{j=1,\dots, k+1,\; N\in \Nb} \int_{\|x\|>r } \alpha \|x\|  \; Q_N^j(dx)= 0,
\] which
entails that $\Fft$ is a $P$-uniformity class and $\lim_{N\to\infty}\sup_{1\leq j\leq k+1} \betat(Q_N^j,P)=0. $
Now, the claim of the theorem follows by the same line of arguments as in Theorem \ref{metric-main}.
  \end{proof}


\section{Estimation of Composite Functionals}
\label{s:empiricalestimators}

We shall consider several statistical estimators of the
composite risk functional, the optimal value, and the optimal solution of problem \eqref{p:composite}.
Given independent and identically distributed $X_1, X_2,...$ realizations of $X$, ${\mathbf{X}}=\{X_i\}_{i=1}^{\infty}$, the convergence for almost all is understood with respect to the product probability ${P}^{\infty}={P}\times {P}\dots .$

\subsection{Empirical estimators}
The empirical estimator of the composite risk functional is the following
\begin{multline}
\label{f:rhoN}
\sum_{i_0=1}^N\frac{1}{N}\Big[f_1\Big(u,\sum_{i_1=1}^N\frac{1}{N}\big[f_2\big(u,\sum_{i_2=1}^N\frac{1}{N}[\cdots f_k(u,\sum_{i_k=1}^N\frac{1}{N}f_{k+1}(u,X_{i_k}),X_{i_{k-1}})]
\cdots,X_{i_1}\big)\big],X_{i_0}\Big)\Big]
  \end{multline}
In our setting, it is not justified to speak about sample average approximation because the objective is not representable as the expected value of a single function. That is why we call problem \eqref{p:approximate} for $Q_N^j= P_N$ for all $j=1,\dots k+1$ \emph{empirical composite optimization} problem.
We emphasize that we use \emph{the entire sample} for the estimation of each expected value at every level.

We show the consistency of the empirical estimators under weaker assumptions than those in the previous section. We use $\varrho^{(N)}_E [u,X]$ for the empirical estimator in formula \eqref{f:rhoN} and $\vartheta_E^{(N)}$,  and  $S_E^{(N)}$ for the corresponding optimal value and optimal solutions in problem \eqref{p:approximate} when $Q_N^j= P_N$.
\begin{proposition}
\label{empirical_SLLN}
Suppose Assumption 1 holds for $Q_N^j= P_N$ for all $j=1,\dots k+1$.
Assume $f_j(u,\eta_j, \cdot)$, $j=1,\dots, k$, and $f_{k+1}(u,\cdot)$ are uniformly bounded for all $u\in\Uc$ and for all $\eta_j \in I_j$ by a $P$-integrable function $g_j:\Rb^m\to \Rb$, i.e.,
$\|f_j(u,\eta_j, x)\|  \leq g_j(x)$ and $\|f_{k+1}(u, x)\|  \leq g_{k+1}(x)\quad\text{for all } x\in\Xc. $
Then
$\varrho_E^{(N)} [u,X]\xrightarrow[N\to\infty]{a.s.} \varrho [u,X]$ for every $u\in \Uc$,
$\vartheta_E^{(N)}\xrightarrow[N\to\infty]{a.s.} \vartheta$,
 and  $\ds(S_E^{(N)},S) \xrightarrow[N\to\infty]{a.s.} 0$.
Additionally, if problem \eqref{p:composite} has a unique solution $\hat{u}$, then $\Ds(S_E^{(N)},S) \xrightarrow[N\to\infty]{a.s.} 0$ as well.
\end{proposition}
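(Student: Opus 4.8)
The plan is to reduce the statement to Theorem~\ref{metric-main}: under the stated domination hypothesis I will show that the empirical measures satisfy $\beta_0(P_N,P)\to 0$ almost surely, and then apply Theorem~\ref{metric-main} along almost every sample path. The first step is a uniform (Glivenko--Cantelli type) strong law of large numbers at each level of the composition. For fixed $j\in\{1,\dots,k\}$ the set $\Uc\times I_j$ is compact, $(u,\eta_j)\mapsto f_j(u,\eta_j,x)$ is continuous for every $x$, and each component of $f_j$ is bounded in absolute value by the $P$-integrable function $g_j$; the classical uniform strong law of large numbers for such integrands over a compact parameter set (see, e.g., \cite[Chapter~5]{mainbook}) then gives, componentwise and hence in Euclidean norm,
\[
\sup_{(u,\eta_j)\in\Uc\times I_j}\Big\|\frac1N\sum_{i=1}^N f_j(u,\eta_j,X_i)-\bar f_j^{P}(u,\eta_j)\Big\|\xrightarrow[N\to\infty]{a.s.}0 ,
\]
together with continuity of $\bar f_j^{P}$ on $\Uc\times I_j$; the same argument applied to $f_{k+1}$ yields $\sup_{u\in\Uc}\|\bar f_{k+1}^{P_N}(u)-\bar f_{k+1}^{P}(u)\|\to 0$ a.s. Since $\beta_0(P_N,P)$ is the maximum over the finitely many levels $j=1,\dots,k+1$ of exactly these suprema, we obtain $\beta_0(P_N,P)\to 0$ on an event $\Omega_0$ of probability one.

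On $\Omega_0$, intersected with the probability-one event on which Assumption~1 holds for $Q_N^j=P_N$, the deterministic sequence $P_N(\omega)$ satisfies the hypotheses of Theorem~\ref{metric-main}, so applying that theorem path by path yields, for every $\omega$ in this event: $\varrho_E^{(N)}[u,X]\to\varrho[u,X]$ for every $u\in\Uc$, $\vartheta_E^{(N)}\to\vartheta$, and $\ds(S_E^{(N)},S)\to 0$; and, when \eqref{p:composite} has the unique solution $\hat u$, also $\Ds(S_E^{(N)},S)\to 0$, exactly as in the proof of Theorem~\ref{metric-main}. Here, as there, Assumption~1 is used to identify $\min_{u\in U}$ with $\min_{u\in\Uc}$ for both the limiting and the empirical problems, the latter being legitimate because $\varrho_E^{(N)}[\cdot,X]$ is a composition of continuous functions and hence attains its minimum on the compact set $\Uc$, so $S_E^{(N)}$ is nonempty for $N$ large. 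Since all four statements hold on an event of probability one, their almost-sure versions follow.

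I expect the uniform strong law of large numbers of the first step to be the crux. A pointwise SLLN at each level does not suffice, because in \eqref{f:rhoN} the argument fed into $f_j$ is the random nested estimator, not a fixed $\eta_j$, so one needs convergence that is uniform over $\eta_j\in I_j$; the interior conditions $\bar f_{j+1}^{P}(\Uc,I_{j+1})\subset\intt(I_j)$ and $\bar f_{k+1}^{P}(\Uc)\subset\intt(I_k)$ are precisely what keeps these nested estimators inside $I_j$ for $N$ large so that the uniform bound is applicable, and the $P$-integrable dominating functions $g_j$ are what upgrade the pointwise ergodic convergence to the uniform convergence needed here. Without integrable domination the reduction to Theorem~\ref{metric-main} would break down, which is why this assumption, rather than a Lipschitz/boundedness requirement on the $f_j$ as in Theorem~\ref{metric-Lipschitz}, is the natural weakening exploited in this proposition.
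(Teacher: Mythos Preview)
Your proposal is correct and follows essentially the same route as the paper: both arguments use the integrable envelopes $g_j$ together with continuity of $f_j$ in $(u,\eta_j)$ over the compact sets $\Uc\times I_j$ to obtain a uniform strong law (equivalently, that $\Ff_0$ is a Glivenko--Cantelli class), conclude $\beta_0(P_N,P)\to 0$ a.s., and then invoke Theorem~\ref{metric-main} pathwise. Your write-up is in fact more explicit than the paper's---you spell out the uniform SLLN reference, the reduction of $\beta_0$ to the finitely many level-wise suprema, and the role of the interior conditions $\bar f_{j+1}^P(\Uc,I_{j+1})\subset\intt(I_j)$---whereas the paper compresses these into the single assertion that $\Ff_0$ is Glivenko--Cantelli and an appeal to the continuous mapping theorem.
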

\begin{proof}
We define a function $g:\Xc\to \Rb$
by setting
$g(x) = \max \{g_1(x),\dots,g_{k+1}(x)\}\quad x\in\Xc.$
It is an integrable function bounding uniformly the set of functions
$\Ff_0$, which entails that $\Ff_0$ is a Glivenko-Cantelli class. We define the perturbation $h^{P_N}$ by setting
$h^{P_N}_i = \frac{1}{N}\sum_{i=1}^{N} \bar{\mathbf{f}}_i(u,\eta,X_{i})$ for all $i=1,\dots, d$. We use $h^{P_N}$ as the perturbation defined in \eqref{directionh} and proceed with the same line of arguments as in the proof of Theorem~\ref{metric-main} with the additional invocation of the continuous mapping theorem.
  \end{proof}
Under these assumptions every solution of problem \eqref{p:approximate} is a strongly consistent estimator of the true solution.

\subsection{Smoothed estimators by convolutions}

We consider the smoothed estimators that are obtained by applying a convolution with a measure $\mu_N$ to the empirical measure $P_N$ associated with the sample at hand.
In \cite{tucl}, the notion of proper approximate convolutional identity is introduced. This is
a sequence of measures $\{\mu_N\}_{N=1}^\infty$, independent of $P_N$, such that
$\mu_N$ converge weakly to the point mass $\delta(0)$ when $N\to\infty$ and
for every $a>0$,
$\lim_{N\to\infty} |\mu_N| (\Rb^m \setminus [-a,a]^m) = 0$, with $|\mu_N|$ denoting the total variation of $\mu_N.$

We augment these conditions by assuming the following.
\begin{assumption}
The sequence of measures $\{\mu_N\}$ are independent of $P_N$, normalized  ($\mu_N(\Rb^m)=1$), and satisfying
\begin{itemize}
\item $\{\mu_N\}$ converges weakly to the point mass $\delta(0)$ when $N\to\infty$;
\item $\int_{\Rb^m} \|z\| \,d\mu_N(z)$ is finite and
 $\lim\limits_{r\to\infty} \lim\limits_{N\to\infty}\int_{\Rb^m:\|z\|> r } \|z\| \,d\mu_N(z) =0.$
\end{itemize}
\end{assumption}
This assumption is satisfied, if for example all $\mu_N$ have bounded support, or have densities $d(z)$ with respect to the Lebesque measure, with tails satisfying $d(z)\leq 1/\|z\|^{1+\varepsilon}$ for some $\varepsilon>0.$
The smooth estimator for the expectation of a function $g:\Rb^m\to\Rb$ is defined as follows:
\begin{equation}
\label{smoothE}
[P_N *\mu_N] g(X) = \frac{1}{N}\sum_{i=1}^N \int_{\Rb^m} g(X_i+z) \, d\mu_N(z).
\end{equation}
A special case is given by a kernel estimator of form:
\[
\frac{1}{Nh_N^m}\sum_{i=1}^N \int_{\Rb^m} g(x) K\Big(\frac{x-X_i}{h_N}\Big)\, dx,
\]
where $K$ is a $m$-dimensional density function with respect to the Lebesgue measure and $h_N>0$ is a smoothing parameter such that $\lim_{N\to\infty} h_N=0$. We have
 $d\mu_N(x) = \frac{1}{h_N^m} K\Big(\frac{x}{h_N}\Big)\,dx$.

The estimators $\mu_N$ may take more general form than the kernel estimator just defined for illustration (cf. \cite{tsybakov2008introduction,gine2004weighted}).
We also do not need to apply the convolution to all levels of nesting but we always use \emph{all} observations at every level of nesting.

When using kernels, we shall assume the following properties.
\begin{itemize}
\item[(k1)] The kernel $K$ of order $s> 1$ is a density function with respect to the Lebesgue measure satisfying
$\int\limits_{\Rb^m} y_l^j K(y)dy=0$ for $l=1,\cdots,m$, $j=1,\dots, \lfloor s\rfloor$ with $\lfloor s\rfloor$ being the largest integer smaller than $s.$
\item[(k2)]  The $s$-th order moment
$m_s(K) = \int\limits_{\Rb^m}\|y\|^s K(y)dy$ is finite.
\end{itemize}
Under assumptions (k1)-(k2), all moments
$m_\alpha(K) = \int\limits_{\Rb^m}\|y\|^\alpha K(y)dy$ for all $\alpha\in (0,s]$ are finite.

In order to avoid cluttering the notation, we shall omit the area of the integration when it does not lead to ambiguity. We use $\vartheta_K^{(N)}$, $\varrho^{(N)}_K [u,X]$, and  $S_K^{(N)},$ respectively, when the smoothed estimators use the same kernel for smoothing all functions in the composition.
Similarly, we use $\vartheta_E^{(N)}$, $\varrho^{(N)}_E [u,X]$, and  $S_E^{(N)}$ when only sample averages are used.

We shall show that the strong law of large numbers holds for the smoothed estimators and for those of mixed nature under relatively mild assumptions. Assume that the functions $f_j,$ $j=1,\dots k$ defining the problem belong to the set $\Fft.$  The index set $J\subseteq\{1,2,\dots,k+1\}$ contains all indices of the functions in the composition, where smoothing is applied.
We use the notation $\varrho_\mu^{(N,J)}$ for the estimator, in which $Q_N^j = P_N *\mu_N$ for $j\in J$ and $Q_N^j = P_N$ for $j\in \{1,2,\dots,k+1\}\setminus J.$ The corresponding optimal value and optimal solutions are denoted by $\vartheta_\mu^{(N,J)}$ and $S_\mu^{(N,J)}$, respectively.


\begin{theorem}
\label{SLLN_smoothed}
Let an index set $J\subseteq\{1,2,\dots,k+1\},$ and a sequence of measures $\{\mu_N\}$ satisfying Assumption 2 be given. Assume the following conditions
\begin{itemize}
	\item for $j\in J$, the functions $f_{j,i}(u,\eta_j,\cdot)\in \Fft$, $i=1,\dots, m_{j-1}$ for all $(u,\eta_j)\in \Uc\times I_j$; if $k+1\in J$, then $f_{k+1,i}(u,\cdot)\in\Fft$, $i=1,\dots, m_k$ for all $u\in U$.
 	\item for $j\not\in J$, $f_j(u,\eta_j, \cdot)$ as well as $f_{k+1}(u,\cdot)$ for $k+1\not\in J$ are uniformly bounded for all $(u,\eta\in\Uc\times I$ by a $P$-integrable function $g:\Rb^m\to \Rb$;
\end{itemize}
Then
$\varrho_\mu^{(N,J)} [u,X]\xrightarrow[N\to\infty]{a.s.} \varrho [u,X]$ for every $u\in \Uc$,
$\vartheta_\mu^{(N,J)}\xrightarrow[N\to\infty]{a.s.} \vartheta$,
 and  $\ds(S_\mu^{(N,J)},S) \xrightarrow[N\to\infty]{a.s.} 0$.
Additionally, if problem \eqref{p:composite} has a unique solution, then $\Ds(S_\mu^{(N,J)},S) \xrightarrow[N\to\infty]{a.s.} 0$ as well.
\end{theorem}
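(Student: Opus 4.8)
The plan is to reduce the statement, following the proofs of Theorem~\ref{metric-main} and Proposition~\ref{empirical_SLLN}, to a level-by-level uniform strong law of large numbers for the random perturbation
\[
h^{(Q_N^1..Q_N^{k+1})}(u,\eta)=\big(\bar{f}_1^{Q_N^1}(u,\eta_1),\dots,\bar{f}_k^{Q_N^k}(u,\eta_k),\bar{f}_{k+1}^{Q_N^{k+1}}(u)\big)^{\top},
\]
that is, to the almost sure convergence $\sup_{(u,\eta_j)\in\Uc\times I_j}\|\bar{f}_j^{Q_N^j}(u,\eta_j)-\bar{f}_j^{P}(u,\eta_j)\|\to0$ for $j=1,\dots,k$ and the analogous statement for the innermost level $j=k+1$. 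Granting this, $h^{(Q_N^1..Q_N^{k+1})}\to\bar{\mathbf{f}}^P$ in $\Hc$ almost surely; continuity of the $f_j$ in their second argument then propagates this up the composition to the uniform-in-$u$ (hence pointwise) convergence $\varrho_\mu^{(N,J)}[u,X]\to\varrho[u,X]$, and applying \cite[Theorem 4.2.2]{bank1982non} to the functional $\Psi$ at the parameter $\bar{\mathbf{f}}^P$ gives $\vartheta_\mu^{(N,J)}\to\vartheta$ together with upper semicontinuity of $h\rightrightarrows S(h)$, hence $\ds(S_\mu^{(N,J)},S)\to0$ and the Pompeiu--Hausdorff conclusion in the single-valued case, exactly as before; Assumption~1 for the mixed sequence $\{Q_N^j\}$ lets us work over the compact set $\Uc$ for $N$ large.

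For levels $j\notin J$ one has $Q_N^j=P_N$, and the required convergence is the Glivenko--Cantelli property already used in Proposition~\ref{empirical_SLLN}, since the families $\{f_j(u,\eta_j,\cdot):(u,\eta_j)\in\Uc\times I_j\}$ — and $\{f_{k+1}(u,\cdot):u\in\Uc\}$ if $k+1\notin J$ — carry the $P$-integrable envelope $g$ and the required continuity. For levels $j\in J$ one has $Q_N^j=P_N*\mu_N$, i.e.\ $\bar{f}_j^{Q_N^j}(u,\eta_j)=\frac1N\sum_{i=1}^N\int f_j(u,\eta_j,X_i+z)\,d\mu_N(z)$, and I would split
\[
\bar{f}_j^{P_N*\mu_N}(u,\eta_j)-\bar{f}_j^{P}(u,\eta_j)=A_N(u,\eta_j)+B_N(u,\eta_j),
\]
with smoothing bias $B_N(u,\eta_j)=\Eb[\int(f_j(u,\eta_j,X+z)-f_j(u,\eta_j,X))\,d\mu_N(z)]$ and centered empirical term $A_N(u,\eta_j)=\int(\frac1N\sum_i f_j(u,\eta_j,X_i+z)-\Eb f_j(u,\eta_j,X+z))\,d\mu_N(z)$. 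As in the proof of Theorem~\ref{metric-Lipschitz}(b), take the common modulus $\bar w=\max_{1\le\ell\le d}w_\ell$ subadditive, hence continuous with $\bar w(t)\le\alpha t+\beta$. Then $\sup_{u,\eta_j}|B_N(u,\eta_j)|\le\int\bar w(\|z\|)\,d\mu_N(z)$; splitting at $\|z\|=a$, the part over $\{\|z\|\le a\}$ tends to $\bar w(0)=0$ since $\mu_N$ converges weakly to $\delta(0)$, and the part over $\{\|z\|>a\}$ is at most $\alpha\int_{\|z\|>a}\|z\|\,d\mu_N(z)+\beta\mu_N(\|z\|>a)$, which vanishes on letting $N\to\infty$ and then $a\to\infty$ by Assumption~2.

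For the centered term I would set $\Delta_N(z)=\sup_{(u,\eta_j)\in\Uc\times I_j}|\frac1N\sum_i f_j(u,\eta_j,X_i+z)-\Eb f_j(u,\eta_j,X+z)|$, so $\sup_{u,\eta_j}|A_N(u,\eta_j)|\le\int\Delta_N(z)\,d\mu_N(z)$, and split this integral at $\|z\|=a$. Over $\{\|z\|\le a\}$ the index set $\Uc\times I_j\times\{\|z\|\le a\}$ is compact, $(u,\eta_j,z)\mapsto f_j(u,\eta_j,x+z)$ is continuous, and $x\mapsto g(x)+\alpha a+\beta$ is a $P$-integrable envelope (since $|f_j(u,\eta_j,x+z)|\le g(x)+\bar w(\|z\|)$), so the classical uniform SLLN gives $\sup_{\|z\|\le a}\Delta_N(z)\to0$ a.s.\ and hence $\int_{\|z\|\le a}\Delta_N\,d\mu_N\to0$. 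Over $\{\|z\|>a\}$ one bounds $\Delta_N(z)\le\frac1N\sum_i g(X_i)+\Eb g(X)+2\bar w(\|z\|)$; since $\frac1N\sum_i g(X_i)\to\Eb g(X)$ a.s.\ by the ordinary SLLN, on a full-probability set and for $N$ large $\Delta_N(z)\le C+2\alpha\|z\|$ with $C$ deterministic, so $\int_{\|z\|>a}\Delta_N\,d\mu_N\le C\mu_N(\|z\|>a)+2\alpha\int_{\|z\|>a}\|z\|\,d\mu_N(z)$, again vanishing on $N\to\infty$ then $a\to\infty$ by Assumption~2. Intersecting the countably many probability-one events indexed by $a\in\Nb$ yields $\sup_{u,\eta_j}|A_N|\to0$ a.s.; with the bias bound this gives the level-$j$ convergence for $j\in J$ and completes the reduction.

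The step I expect to be the real obstacle is the centered empirical term $A_N$ at the smoothed levels: because the convolution integrates $z$ over all of $\Rb^m$, the relevant class $\{f_j(u,\eta_j,\cdot+z):u\in\Uc,\eta_j\in I_j,z\in\Rb^m\}$ is indexed by a noncompact set, so no single uniform SLLN applies. The device that resolves it is the two-scale truncation in $z$ above — a compact core handled by a classical uniform SLLN with an integrable envelope, and a tail handled jointly by the second bullet of Assumption~2 and the sublinear growth of the (without loss of generality subadditive) common modulus $\bar w$, reinforced by the ordinary SLLN applied to $\frac1N\sum_i g(X_i)$.
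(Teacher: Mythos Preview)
Your overall strategy is sound, but it takes a genuinely different route from the paper's. For the smoothed levels $j\in J$ the paper does not perform your bias-plus-centered decomposition $A_N+B_N$; instead it argues that, almost surely along the sample, the random measures $Q_N^j=P_N*\mu_N$ satisfy the hypotheses of Theorem~\ref{metric-Lipschitz}(b) (weak convergence to $P$ and convergence of the first absolute moment), and then invokes that theorem as a black box to obtain $\betat(P_N*\mu_N,P)\to0$ a.s., hence the level-$j$ uniform convergence. Your approach replaces this $P$-uniformity-class argument by an explicit two-scale truncation in $z$; it is longer but more self-contained, and it makes the role of the tail-moment condition in Assumption~2 completely transparent. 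Both arguments rest on the same analytic fact---the subadditive common modulus $\bar w$ grows at most linearly, so tail contributions are dominated by $\int_{\|z\|>a}\|z\|\,d\mu_N(z)$.

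One slip to correct: at the smoothed levels you use the envelope $g$, writing $|f_j(u,\eta_j,x+z)|\le g(x)+\bar w(\|z\|)$ and later $\Delta_N(z)\le\frac1N\sum_i g(X_i)+\Eb g(X)+2\bar w(\|z\|)$. But the hypothesis provides the $g$-bound only for $j\notin J$; for $j\in J$ you have merely $f_{j,i}\in\Fft$. The fix is immediate: membership in $\Fft$ supplies its own integrable envelope, since for any fixed $\bar x\in\Xc$ one has $|f_j(u,\eta_j,x)|\le\sup_{(u,\eta_j)\in\Uc\times I_j}|f_j(u,\eta_j,\bar x)|+\bar w(\|x-\bar x\|)\le C+\alpha\|x\|+\beta'=:G(x)$, which is $P$-integrable because $X$ has finite first moment. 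With $G$ in place of $g$, both your compact-core uniform SLLN (over $(u,\eta_j,z)\in\Uc\times I_j\times\{\|z\|\le a\}$ with envelope $G(\cdot)+\alpha a+\beta'$) and your tail estimate go through unchanged.
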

\begin{proof}
It is sufficient to show that the empirical and the smoothed estimators converge uniformly to  $\mathbf{\bar{f}}^P(u,\eta)$ for all $(u,\eta)\in U\times I$ for all functions $f_j$, $j=1,\dots ,k+1.$ Then the statement follows in the same way as in Theorem~\ref{metric-main} using additionally the continuous mapping theorem.
The perturbation function $h_\mu^{(N,J)}(u,\eta)$ is defined as follows; its $j$-th component is given by
\begin{equation}
\label{e:hmu}
[h_\mu^{(N, J)}]_j(u,\eta)=
\begin{cases}
\frac{1}{N}\sum_{i=1}^{N}\int f_j(u,\eta,X_i+ z) d\mu_N(z) &\text{ if } j\in J, j=1,\dots, k,\\
\frac{1}{N}\sum_{i=1}^{N} f_j(u,\eta,X_{i}) &\text{ if } j\not\in J, j=1,\dots, k,\\
\frac{1}{N}\sum_{i=1}^{N}\int f_{k+1}(u,X_{i}+ z) d\mu_N(z) &\text{ if } k+1\in J,\\
\frac{1}{N}\sum_{i=1}^{N} f_{k+1}(u,X_{i}+ z) &\text{ if } k+1\not\in J.\\
\end{cases}
\end{equation}
We cannot apply directly the results form \cite{tucl} because we do not assume that the smoothed functions are $\mu_N$-essentially bounded.

The difference between $h_\mu^{(N,J)}(u,\eta)$ and the expected value $\mathbf{\bar{f}}^P(u,\eta)$, can be bounded by the maximal difference of the respective components of these vector functions.
The $j$-th component of the function difference $\Delta_j^{(N)}(u,\eta)=[h_\mu^{(N,J)}(u,\eta) - \mathbf{\bar{f}}^P(u,\eta)]_j$ is given by
\begin{equation*}
\Delta_j^{(N)}(u,\eta)=
\begin{cases}
\frac{1}{N}\sum_{i=1}^{N}\int f_j(u,\eta,X_i+ z) \, d\mu_N(z) - \mathbf{\bar{f}}_j^P(u,\eta_j) & \text{ if } j\in J, j=1,\dots, k,\\
\frac{1}{N}\sum_{i=1}^{N} f_j(u,\eta,X_i) - \mathbf{\bar{f}}_j^P(u,\eta_j) &\text{ if } j\not\in J, j=1,\dots, k,\\
\frac{1}{N}\sum_{i=1}^{N} \int f_{k+1}(u,X_{i}+ z) \, d\mu_N(z) - \mathbf{\bar{f}}_{k+1}^P(u) & \text{ if } k+1\in J,\\
\frac{1}{N}\sum_{i=1}^{N} f_{k+1}(u,X_{i}) \, d\mu_N(z) - \mathbf{\bar{f}}_{k+1}^P(u)  &\text{ if } k+1\not\in J.\\
\end{cases}
\end{equation*}
For $j\not\in J$, we have $\lim_{N\to\infty}\sup_{(u,\eta)\in \Uc\times I} \Delta_j^{(N)}= 0$ as in Proposition~\ref{empirical_SLLN}.
To show the convergence of $\sup_{(u,\eta)\in \Uc\times I} \Delta_j^{(N)}$  for all $j\in J$ we shall show that the assumptions of Theorem~\ref{metric-Lipschitz} are satisfied. We only need verify that $\int_\Xc \|x\| \; \mu_N(dx) \xrightarrow[N\to\infty]{}\int_\Xc \|x\| \; P(dx).$ The following relations hold:
\begin{align*}
\lim_{N\to\infty} \int \|z\|\, d\mu_N(z) &= \lim_{N\to\infty} \int_{\|z\|\leq r} \|z\|\, d\mu_N(z) +
\lim_{N\to\infty} \int_{\|z\|> r} \|z\|\, d\mu_N(z)\\
&  = \int_{\|z\|\leq r} \|z\|\, dP(z)  + \lim_{N\to\infty} \int_{\|z\|> r} \|z\|\, d\mu_N(z).
\end{align*}
Letting $r\to\infty$ and using Assumption 2, we obtain
\begin{align*}
\lim_{N\to\infty} \int \|z\|\, d\mu_N(z) = \int_{\|z\|\leq r} \|z\|\, dP(z).
\end{align*}
We infer the uniform convergence of $h_\mu^{(N,J)}(\cdot)$ to $\bar{f}(\cdot).$ The statement follows by the same line of arguments as in the proof of Theorem~\ref{metric-main} invoking additionally the continuous mapping theorem.
  \end{proof}

We observe that the assumptions of Theorem~\ref{SLLN_smoothed} are easier to verify using the kernel estimators.
\begin{corollary}
\label{SLLN_kernel}
	Assume that the kernel function $K$ satisfies (k1)--(k2) and $d\mu_N(x) = \frac{1}{h_N^m} K\Big(\frac{x}{h_N}\Big)\,dx$ with $\lim_{N\to\infty} h_N=0$. Suppose the functions $f_{j,i}(u,\eta_j,\cdot)$, for $j\in J$, $i=1,\dots, m_{j-1}$ belong to $\Fft$ for all $(u,\eta_j)\in \Uc\times I_j$ and
$f_{k+1,i}(u,\cdot)\in\Fft$, $i=1,\dots, m_k$ for all $u\in U$
Then
$\varrho_K^{(N)} [u,X]\xrightarrow[N\to\infty]{a.s.} \varrho [u,X]$ for every $u\in \Uc$,
$\vartheta_K^{(N)}\xrightarrow[N\to\infty]{a.s.} \vartheta$,
 and  $\ds(S_K^{(N)},S) \xrightarrow[N\to\infty]{a.s.} 0$.
Additionally, if problem \eqref{p:composite} has a unique solution, then $\Ds(S_K^{(N)},S) \xrightarrow[N\to\infty]{a.s.} 0$ as well.
\end{corollary}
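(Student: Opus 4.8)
The plan is to verify that the kernel-based convolution measures $\mu_N$ given by $d\mu_N(x) = h_N^{-m} K(x/h_N)\,dx$ satisfy the hypotheses of Theorem \ref{SLLN_smoothed}, so that the conclusion follows immediately. There are two things to check: first, that $\{\mu_N\}$ satisfies Assumption 2; second, that the structural conditions on the functions $f_j$ in Theorem \ref{SLLN_smoothed} are met. The second point is essentially given — here the index set $J$ consists of the levels where smoothing is applied, and the corollary's hypothesis already places $f_{j,i}(u,\eta_j,\cdot)\in\Fft$ for $j\in J$ and $f_{k+1,i}(u,\cdot)\in\Fft$; for the remaining levels $j\notin J$ one uses the empirical estimator, and in the kernel setting treated by the notation $\varrho_K^{(N)}$ smoothing is applied throughout, so the boundedness clause is vacuous or follows from the $\Fft$ membership together with the sublinear-growth bound $G(x)=\alpha\|x\|+b$ derived in the proof of Theorem \ref{metric-Lipschitz}.

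The substantive step is verifying Assumption 2 for the kernel convolutions. Normalization $\mu_N(\Rb^m)=1$ is immediate since $K$ is a density. For weak convergence $\mu_N\Rightarrow\delta(0)$: by the change of variables $z = h_N y$ one has $\int g(z)\,d\mu_N(z) = \int g(h_N y) K(y)\,dy$, and since $h_N\to 0$ and $g$ is bounded continuous, dominated convergence (with dominating function $\|g\|_\infty K(y)$) gives $\int g(h_N y)K(y)\,dy \to g(0)\int K(y)\,dy = g(0)$. For the moment conditions: again by the substitution $z=h_N y$,
\[
\int_{\Rb^m}\|z\|\,d\mu_N(z) = h_N\int_{\Rb^m}\|y\|\,K(y)\,dy = h_N\, m_1(K),
\]
which is finite by the remark following (k1)--(k2) that all moments $m_\alpha(K)$, $\alpha\in(0,s]$, are finite (note $1\le s$). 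This already tends to $0$ as $N\to\infty$, so a fortiori $\lim_{r\to\infty}\lim_{N\to\infty}\int_{\|z\|>r}\|z\|\,d\mu_N(z)=0$, since for each fixed $r$ the inner limit is bounded by $\lim_N h_N\,m_1(K) = 0$. Thus Assumption 2 holds.

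With Assumption 2 verified and the function-class conditions in place, Theorem \ref{SLLN_smoothed} applies directly and yields $\varrho_K^{(N)}[u,X]\xrightarrow{a.s.}\varrho[u,X]$ for each $u\in\Uc$, $\vartheta_K^{(N)}\xrightarrow{a.s.}\vartheta$, $\ds(S_K^{(N)},S)\xrightarrow{a.s.}0$, and, under uniqueness of the solution, $\Ds(S_K^{(N)},S)\xrightarrow{a.s.}0$. I do not anticipate a genuine obstacle here; the only point requiring a little care is confirming that the kernel moment assumptions (k1)--(k2) are exactly what is needed to control $\int\|z\|\,d\mu_N(z)$ — i.e., that finiteness of $m_s(K)$ with $s>1$ forces finiteness of $m_1(K)$, which is the content of the sentence "all moments $m_\alpha(K)$ for $\alpha\in(0,s]$ are finite." Everything else is a direct citation of Theorem \ref{SLLN_smoothed}.
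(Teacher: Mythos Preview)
Your proposal is correct and follows essentially the same approach as the paper: verify that the kernel convolution measures satisfy Assumption~2 via the change of variables $z=h_Ny$ (yielding $\int\|z\|\,d\mu_N(z)=h_N m_1(K)\to 0$), and then invoke Theorem~\ref{SLLN_smoothed}. The only difference is cosmetic---the paper cites \cite{tucl} for the weak convergence $\mu_N\Rightarrow\delta(0)$ whereas you supply the short dominated-convergence argument directly.
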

\begin{proof}
We only need to show that Assumption 2 is satisfied for the sequence
 $d\mu_N(x) = \frac{1}{h_N^m} K\Big(\frac{x}{h_N}\Big)\,dx$.
Indeed, the weak convergence condition is shown in \cite{tucl}. Additionally,
\[
\lim_{N\to\infty} \frac{1}{h_N^m} \int_{\|z\|> b } \|z\| K\Big(\frac{z}{h_N}\Big)\, dx =
\lim_{N\to\infty} \int_{\|y\|> \frac{1}{h_N}b } \|h_Ny\| K(y)\, dy\\
\leq  \lim_{N\to\infty} h_N m_1(K) =0,
\]
which completes the proof.
  \end{proof}

From a practical perspective, we may not have uniform modulus of continuity for the case of unbounded function but we still need to ensure consistency of the estimators. We shall show that  $\varrho_\mu^{(N,J)}[u,X]$ converges in probability to $\varrho[u,X]$ (written $\varrho^{(N,J)}[u,X]\xrightarrow[N\to\infty]{p}\varrho[u,X]$) under mild conditions.
\begin{theorem}
\label{t:consistency}
Let an index set $J\subseteq\{1,2,\dots,k+1\},$ and a sequence of measures $\{\mu_N\}$ satisfying Assumption 2 be given.
Assume that for $j=1,\dots, k$, $f_j(u,\eta_j, \cdot)$ and $f_{k+1}(u,\cdot)$ are continuous and uniformly bounded for all $u\in\Uc$ and for all $\eta_j \in I_j$ by a $P$-integrable function $g_j:\Rb^m\to \Rb$.
Then
$\varrho_\mu^{(N,J)} [u,X]\xrightarrow[N\to\infty]{p} \varrho [u,X]$ for every $u\in \Uc$,
$\vartheta_\mu^{(N,J)}\xrightarrow[N\to\infty]{p} \vartheta$,
 and  $\ds(S_\mu^{(N,J)},S) \xrightarrow[N\to\infty]{p} 0$.
Additionally, if problem \eqref{p:composite} has a unique solution, then $\Ds(S_\mu^{(N,J)},S) \xrightarrow[N\to\infty]{p} 0$ as well.
\end{theorem}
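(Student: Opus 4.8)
The plan is to mirror the reductions in the proofs of Theorems~\ref{metric-main} and~\ref{SLLN_smoothed}: it is enough to show that the perturbation $h_\mu^{(N,J)}(\cdot)$ from \eqref{e:hmu} converges to $\bar{\mathbf{f}}^P(\cdot)$ \emph{in probability} in the space $\Hc$, i.e.\ $\sup_{(u,\eta)\in\Uc\times I}\big\|h_\mu^{(N,J)}(u,\eta)-\bar{\mathbf{f}}^P(u,\eta)\big\|\xrightarrow[N\to\infty]{p}0$. Granting this, the functional $\Psi$ of the proof of Theorem~\ref{metric-main} is continuous on $\Uc\times\Hc$, so \cite[Theorem 4.2.2]{bank1982non} yields that the scalar maps $h\mapsto\min_{u\in\Uc}\Psi(u,h)$ and $h\mapsto\ds(S(h),S)$ are continuous at the (deterministic) point $\bar{\mathbf{f}}^P$, and that $h\rightrightarrows S(h)$ is upper semicontinuous there; composing with the in-probability convergent sequence $h_\mu^{(N,J)}$ through the continuous mapping theorem gives $\vartheta_\mu^{(N,J)}\xrightarrow{p}\vartheta$, $\ds(S_\mu^{(N,J)},S)\xrightarrow{p}0$, and, in the unique-solution case, $\Ds(S_\mu^{(N,J)},S)\xrightarrow{p}0$, exactly as before.

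It suffices to treat one component $j$ at a time. For $j\notin J$ the relevant quantity is the empirical average $\frac1N\sum_{i=1}^N f_j(u,\eta,X_i)$, and the argument of Proposition~\ref{empirical_SLLN} applies verbatim: continuity of $f_j$ in $(u,\eta)$, the $P$-integrable envelope $g_j$, and compactness of $\Uc\times I_j$ make the relevant family a Glivenko--Cantelli class, so the sup converges to zero almost surely, hence in probability. For $j\in J$ I would split the smoothed average as
\[
\underbrace{\frac1N\sum_{i=1}^N\Big(\int f_j(u,\eta,X_i+z)\,d\mu_N(z)-f_j(u,\eta,X_i)\Big)}_{\text{smoothing bias}}\;+\;\underbrace{\Big(\frac1N\sum_{i=1}^N f_j(u,\eta,X_i)-\bar f_j^P(u,\eta)\Big)}_{\text{empirical error}},
\]
and handle the empirical error exactly as in the case $j\notin J$. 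For the smoothing-bias term, fix $\varepsilon>0$; uniform tightness of the family $\{P,P_N,P_N*\mu_N\}$ (from Glivenko--Cantelli for $P_N$, weak convergence $\mu_N\Rightarrow\delta(0)$, and Assumption~2) produces a compact $\mathcal{K}\subset\Rb^m$ carrying all but $\varepsilon$ of the mass of each of these measures for $N$ large. On a suitable enlargement of $\mathcal{K}$, joint continuity of $f_j$ and compactness of $\Uc\times I_j$ give a modulus of continuity $\omega$ with $\omega(\delta)\downarrow0$; splitting $d\mu_N(z)$ over $\{\|z\|\le\delta\}$ and $\{\|z\|>\delta\}$, using $\int_{\|z\|>\delta}d\mu_N(z)\to0$ and $\int\|z\|\,d\mu_N(z)$ bounded (Assumption~2), and using $g_j$ to control the contributions of $X_i\notin\mathcal{K}$, makes $\sup_{(u,\eta)}|\text{smoothing bias}|$ smaller than a fixed multiple of $\varepsilon+\omega(\delta)$ with probability tending to one; letting $\delta\to0$ then $\varepsilon\to0$ closes the component.

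The main obstacle is precisely the smoothing-bias term for $j\in J$: unlike Theorem~\ref{SLLN_smoothed}, here there is no global modulus of continuity for $f_j(u,\eta,\cdot)$, and the envelope $g_j$ must be used at the shifted points $X_i+z$, where $\mu_N$ can place mass far outside $\Xc$. Making the tail contributions uniformly negligible thus requires a genuine \emph{uniform integrability} of (a truncation of) $g_j$ under the convolved measures $P_N*\mu_N$; I would obtain it by replacing $f_j$ with a bounded continuous truncation $f_j^{(M)}$ that agrees with $f_j$ on a large ball, controlling the error by $\int_{\mathcal{K}^c}g_j\,dQ$ for $Q\in\{P,P_N,P_N*\mu_N\}$ (small by tightness and the SLLN for $\frac1N\sum g_j(X_i)$, together with Assumption~2's control of $\int_{\|z\|>r}\|z\|\,d\mu_N(z)$), and then applying the elementary bounded-continuous case of the smoothing-bias bound to $f_j^{(M)}$. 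Everything else is a repetition of the machinery already developed for Theorems~\ref{metric-main}--\ref{SLLN_smoothed}.

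Finally, since only convergence in probability is asserted, an alternative that streamlines the bookkeeping is to prove, for each component, that every subsequence admits a further subsequence along which the relevant suprema converge to zero almost surely; combined with the reduction of the first paragraph this again yields all four conclusions of the theorem.
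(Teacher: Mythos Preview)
Your proposal is correct and follows essentially the same route as the paper: reduce to uniform-in-$(u,\eta)$ convergence in probability of $h_\mu^{(N,J)}$ to $\bar{\mathbf f}^P$, split into the empirical error (handled via Glivenko--Cantelli as in Proposition~\ref{empirical_SLLN}) plus the smoothing bias, and control the latter by passing to compacts via tightness of $\{P,P_N\}$ and $\{\mu_N\}$, invoking uniform continuity there, and using $\mu_N\Rightarrow\delta(0)$. The only noteworthy difference is that the ``main obstacle'' you flag is handled in the paper more directly than by your truncation $f_j^{(M)}$: one simply restricts \emph{both} $X_i$ and $z$ to a product of compacts $\mathcal K_\delta=\mathcal K_\mu^\delta\times\mathcal K_X^\delta$, absorbs the complement into a $2\delta$ probability loss, and then bounds the smoothing bias on $\mathcal K_\delta$ by $\int_{\mathcal K_\delta} w_\delta(\|z\|)\,d\mu_N(z)\to w_\delta(0)=0$, so no separate uniform-integrability or truncation step is needed.
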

\begin{proof}
 We consider the vector function
 $h_\mu^{(N,J)}(\eta)$ defined in equation \eqref{e:hmu}.
We shall show that the probability for the difference between $h_\mu^{(N,J)}(u,\eta)$ and $\bar{f}(u,\eta)$  to exceed a positive number $\varepsilon$ converges to zero when $N\to\infty.$
We have
\begin{align*}
P\big[ &\sup_{(u,\eta)\in \Uc\times I}  \big\|h_\mu^{(N,J)}(u,\eta)- \bar{f}(u,\eta)\big\| \geq \varepsilon \big] \\
& \leq
 P\big[\sup_{(u,\eta)\in \Uc\times I}\, \big\|h_E^{(N)}(u,\eta)- \bar{f}(u,\eta)\big\| \big] + \sup_{(u,\eta)\in \Uc\times I}\, \big\| h_\mu^{(N,J)}(u,\eta) - h_E^{(N)}(u,\eta)\big\|\geq \varepsilon\big]\\
 & \leq
 P\big[\max\big(\sup_{(u,\eta)\in \Uc\times I}\, \big\|h_E^{(N)}(u,\eta)- \bar{f}(u,\eta)\big\| \big],\sup_{(u,\eta)\in \Uc\times I}\, \big\| h_\mu^{(N,J)}(u,\eta) - h_E^{(N)}(u,\eta)\big\|\big)\geq \varepsilon/2\big]\\
 & \leq
 P\big[\sup_{(u,\eta)\in \Uc\times I}\, \big\|h_E^{(N)}(u,\eta)- \bar{f}(u,\eta)\big\| \big]\geq \varepsilon/2\big]+
 P\big[\sup_{(u,\eta)\in \Uc\times I}\, \big\| h_\mu^{(N,J)}(u,\eta) - h_E^{(N)}(u,\eta)\big\|\big)\geq \varepsilon/2\big].
\end{align*}
Due to Theorem~\ref{SLLN_smoothed} the first term at the right-hand side converges to zero whenever $N\to\infty$. We show the convergence of the second term.
We use the fact that the set of measures $\{P, P_N, N=1,2,\dots\}$ are uniformly tight, as well as the measures $\{\mu_N, N=1,2,\dots\}$ by virtue of \cite[Proposition 9.3.4]{dudley2002real}. For every $\delta >0$, let
$\mathcal{K}^\delta_X\subset \Rb^m$ be a compact subset such that
$P(\mathcal{K}^\delta_X)\geq 1-\delta$ and $P_N(\mathcal{K}^\delta_X)\geq 1-\delta$ for all $N\in \Nb$ and let $\mathcal{K}^\delta_\mu\subset \Rb^m$ be a compact subset such that
$\mu_N(\mathcal{K}^\delta_\mu)\geq 1-\delta$ for all $N\in \Nb$. Without loss of generality, we may assume that
both sets are convex. We set
$\mathcal{K}_\delta = \mathcal{K}^\delta_\mu\times\mathcal{K}^\delta_X$, we obtain the following estimate:
\begin{align}
 P\big[&\sup_{(u,\eta)\in \Uc\times I}\,\big\| h_\mu^{(N,J)}(u,\eta) - h_E^{(N)}(u,\eta)\big\|\geq \varepsilon/2 \big]\notag\\
 & \leq  P\big[ 1_{\mathcal{K}_{\delta}}\sup_{(u,\eta)\in \Uc\times I}\, \big\| h_\mu^{(N,J)}(u,\eta) - h_E^{(N)}(u,\eta)\big\|\geq \varepsilon/2 \big ] + 2\delta \notag\\
& \leq \sum_{j\in J}P\big[ 1_{\mathcal{K}_{\delta}}\sup_{(u,\eta_j)\in \Uc\times I_j}\, \Big\| \frac{1}{N}\sum_{i=1}^{N}\int f_j(u,\eta,X_i+ z)- f_j(u,\eta,X_i)\, d\mu_N(z)\Big\|\geq \varepsilon/2 \big ] + 2\delta. \label{e:consist1-smooth}
\end{align}
Since every continuous function is uniformly continuous on compact sets,
for each of the norms in \eqref{e:consist1-smooth} with $j=1,\dots,k, j\in J$, we have
$\big\|f_j(u,\eta,X_i+ z)- f_j(u,\eta,X_i)\big\| \leq w_{\delta}^j (\|z\|) .$
Taking  $w_\delta(t) = \max_{j\in J} w_{\delta}^j (t)$, we obtain a common modulus of continuity.
The following estimate holds for each term in the sum in \eqref{e:consist1-smooth}:
\begin{multline*}
 \Big\| \frac{1}{N}\sum_{i=1}^{N}\int_{\mathcal{K}_{\delta}} f_j(u,\eta,X_i+ z)- f_j(u,\eta,X_i)\, d\mu_N(z)\Big\|
 \leq\\
 \quad \frac{1}{N}\sum_{i=1}^{N}\int_{\mathcal{K}_{\delta}}\big\|f_j(u,\eta,X_i+ z)- f_j(u,\eta,X_i)\big\| d\mu_N(z)
         \leq  \int_{\mathcal{K}_{\delta}} w_{\delta}(\|z\|)\, d\mu_N(z).
\end{multline*}
Similar estimate is valid for the term involving $f_{k+1}$. Thus, we obtain
\begin{align}
\label{e:modulus_delta}
 P\big[&\sup_{(u,\eta)\in \Uc\times I}\,\big\| h_\mu^{(N,J)}(u,\eta) - h_E^{(N)}(u,\eta)\big\|\geq \varepsilon/2 \big]\leq \sum_{j\in J} P\big[ \int_{\mathcal{K}_{\delta}} w_{\delta}(\|z\|)\, d\mu_N(z)\geq \varepsilon/2 \big ] + 2\delta.
\end{align}
Notice that $w_{\delta}(\|\cdot\|)$ can be chosen continuous on $\mathcal{K}_{\delta}$ by the arguments in the proof of Theorem~\ref{metric-Lipschitz}. Therefore, $\int_{\mathcal{K}_{\delta}} w_{\delta}(\|z\|)\, d\mu_N(z)$
converges to zero, whenever $N\to \infty$ by the weak convergence of $\mu_N(z)$ and $w_{\delta}(0)=0$.
Thus, an $N_\delta$ exists such that for $N\geq N_\delta $, $\int_{\mathcal{K}_{\delta}} w_{\delta}(\|z\|)\, d\mu_N(z)< \varepsilon/2$. This together with \eqref{e:modulus_delta} shows that for $N\geq N_\delta $, we have
\begin{align*}
 P\big[\sup_{(u,\eta)\in \Uc\times I}\, \big\| h_\mu^{(N,J)}(\eta) - h_E^{(N)}(\eta)\big\|\geq \varepsilon/2 \big] &\leq  2\delta.
\end{align*}
Letting $\delta\downarrow 0$, we infer the uniform convergence in probability of $h_\mu^{(N,J)}(\cdot)$ to $\bar{f}(\cdot).$  Since convergence in probability is preserved by continuous mappings, we obtain the convergence in probability of the composition. The remaining part of the proof follows the same line of arguments as the proof of Theorem~\ref{metric-main}  using, additionally, the preservation of convergence in probability under continuous mappings.
  \end{proof}

Our results relate to the work on random approximations presented in \cite{vogel2005,vogel2017}, where the optimization problems do not involve composition but the feasible set may also be random and may be approximated based on sampled data.

\subsection{Wavelet-based estimators}
\label{s:wave}

We consider one more estimator based on alternative generalized kernel construction that was originally proposed in \cite{DP} for the case $m=1$ and was extended for arbitrary dimension in \cite{CSvS}.
Assuming that the distribution $P$ has a density, we define the following wavelet-based estimator for the density of $P$:
 \begin{equation}
 \label{wave}
\widetilde{d}_{N,j}(x)=\sum_{\ell\in\Zb} 2^{-j/2}\frac{1}{N}\sum_{i=1}^{N}\phi(2^{j}X_i-\ell)\phi_{j\ell}(x).
\end{equation}
Here the notation $\Zb$ stands for the set of integer numbers.

The function $\phi_{j\ell}$ is defined as  $\phi_{j\ell}(x)=2^{j/2}\phi(2^{j}x-\ell),
 $ where  $\phi (x)$ is
right-continuous, non-negative, with finite variation, and with a
compact support in an interval $[-a,a]$ with $ 1/2\le a<\infty .$ Furthermore,  the
following conditions are assumed:
\begin{itemize}
\item[(w1)]$\sum_{\ell\in\Zb} \phi(x-\ell) = 1$ for all $x \in \Rb$
\item[(w2)]
$x-\sum_{\ell\in\Zb} \ell \phi(x-\ell) = 0$ for all $x\in \Rb.$
\end{itemize}
Note that the condition (w1) also implies $\int_{\Rb } \phi(x)\,dx = 1$ (c.f. \cite{DPA}). The additional condition (w2) implies that $\int_{\Rb} x\phi(x)\,dx=0$ holds as well.
The integer $j$ is the resolution level and has to be chosen
appropriately in order to avoid over- or under-smoothing.

The suggested estimator \eqref{wave} is the empirical version
of the following approximation operator:
\begin{equation}
\label{appro}
T_j(h)=\sum_{\ell\in\Zb}\langle h,\phi_{j\ell}\rangle\phi_{j\ell}
\end{equation}
whereby each of the scalar products
$\langle h,\phi_{j\ell}\rangle = \int_{\Rb} h(t) \phi_{j\ell}(t)\,dt$
have been estimated using the data.
 The simplest possible locally linear choice of  $\phi(x)$  is
\begin{equation}
\label{linear}
\phi(x)=\begin{cases} 1+x &\text{ for } -1\le x < 0,\\
1-x &\text{ for }  0\le x\le 1,\\
0 & \hbox{ otherwise.}\\
\end{cases}
\end{equation}
 A smoother, locally quadratic  version of $\phi (x)$ is:
\begin{equation}
\label{quadratic}
\phi(x)=\begin{cases}0 &\text{ if  } |x| \ge 3/2,\\
0.5(1.5+x)^2, &\text{ if  }-3/2<x\le -1/2,\\
1+x-(x+0.5)^2, &\text{ if } -0.5<x<0.5,\\
0.5(1.5-x)^2, &\text{ if }1/2\le x\le 3/2.
\end{cases}
\end{equation}

In fact, a whole family of such estimators could be suggested by varying the choice of $\phi .$
They all have the advantage of being \emph{shape-preserving} estimators of the density, meaning that the resulting estimator is  non-negative and integrates to one.
Other asymptotically equivalent modifications of
$\widetilde{d}_{N,j}(x)$ are available that could serve our purpose equally well
but, admittedly, these modifications lack the simplicity of
\eqref{wave}.
It is important to notice that  the wavelet expansion
\eqref{wave} is \emph{non-orthogonal} in general. The shape-preservation property precludes
an orthogonal wavelet expansion being a continuous
function as discussed in \cite{DPA} and in \cite{DP}.

Several recommendations about the choice of the resolution level $j$    exist depending on the
assumptions on the cumulative distribution function of $X$ and the reader can find a detailed
discussion about those in \cite{DP} or \cite{CSvS}. For a large class of densities
the choice  $j \propto  \log_{2} N /5 $ ensures consistency.

Substituting $\tilde{d}_{N,j}$,  we get the estimator
\begin{equation}
\label{newestimatorwave}
\vartheta_w^{(N)} = \min_{u\in \mathcal{U}} \; \int_{\Xc} f(u,x)\tilde{d}_{N,j}(x)\,dx = \min_{u\in \mathcal{U}} \; \frac{2^j}{N} \sum_{i=1}^N
\int_{\Xc} f(u,x) K(2^jX_i,2^jx)\, dx,
\end{equation}
with the generalized kernel $K(y,x)=\sum_{\ell\in \mathbb{Z}}\phi (y-\ell)\phi(x-\ell).$

The constructions described above are based on a function $\phi $ of one-dimensional argument. They generalize algorithmically without  difficulty
to the multivariate case as demonstrated in (\cite{CSvS}). This can be done by using tensor-product wavelets. As a demonstration, let us take the case of dimension $m=2.$ Then we have data vectors $X_i=(X_{i_{1}},X_{i{2}}), i=1,\dots, N.$ For double indices $j=(j_1,j_2), \ell=(\ell_1,\ell_2)$  we define
\[
\left\{ \phi_{j\ell}(x)\right\}=\left\{ \phi_{j_1\ell_1}(x_1)\right\}_{\ell_1\in \mathbb{Z}}\left\{ \phi_{j_2\ell_2}(x_2)\right\}_{\ell_2\in \mathbb{Z}}.
\]
Then for a fixed resolution level vector $j=(j_1,j_2)$ we have the density approximation at the mixed resolution level $j=(j_1,j_2)$ given as
\[
\tilde{d}_{N,j}(x)=\sum_{\ell_1\in \mathbb{Z}}\sum_{\ell_2\in \mathbb{Z}}\Big[ \frac{1}{N}\sum_{i=1}^N \phi(2^{j_1}X_{i_1}-\ell_1)\phi(2^{j_2}X_{i_2}-\ell_2)\Big]\phi(2^{j_1}x_1-\ell_1)\phi(2^{j_2}x_2-\ell_2).
\]
Very often we would choose $j_1=j_2$ but this is not compulsory.

One should mention though that despite the easiness of the generalization for higher dimensions, the well-publicized curse of dimension is inherent to the estimation and the rates of convergence  of the optimal bandwidth are significantly lower. For example, the optimal rate in the estimation of the density is $O(N^{-1/(4+m)})$ and it deteriorates quickly with the dimension $m.$

The estimators obtained using \eqref{newestimatorwave} have similar properties to the usual kernel-based estimators. To spare space, we will not discuss these in this paper.
We can show that for a  variety of densities the  optimal compromise for the choice of $j$ is by achieved when equating the bias and variance terms which gives optimal order of $j^*\propto \log _2 N , $ with $j^*= \log _2 N /5 $ giving very good performance over a  wide variety of density classes.
Results that parallel our statements in Section \ref{s:empiricalestimators} can be formulated by using the uniform limit theorems for linear wavelet density estimators that have been discussed in detail in \cite{GinNickWave2009}. Although our translates  $\{\phi(\cdot -\ell )\}$ do not form an orthogonal system, the results of the cited paper cover the non-orthogonal case, too (see Remark 7 in particular).
We omit consistency results here, however, we shall show in the next section that our wavelet-based risk estimators improve the negative bias of the empirical estimator.

\section{Bias in composite optimization problems}
\label{s:bias}

As already mentioned,  SAA suffers from downward bias.
We can observe that problem \eqref{p:approximate} with $Q_N^J=P_N$ for certain compositions exhibits the same property, i.e., it underestimates the optimal value of problem \eqref{p:composite}.
Consider the problem of the form
\begin{gather*}
        \vartheta = \min_{u \in U} f_1(u,\Eb[f_2(u,X)])\\
        \vartheta_E^{(N)}=\min_{u\in U} f_1\left(u, \frac{1}{N}\sum_{i=1}^N f_2(u,X_i) \right)
    \end{gather*}
    where $U \subset \mathbb{R}^{n}$ is a nonempty compact set.

\begin{lemma}
\label{l:composite-bias-theta}
If the function $f_1(u,\cdot)$ is concave for all $u\in U$, then $\Eb[\vartheta_E^{(N)}]\leq \vartheta$.
\end{lemma}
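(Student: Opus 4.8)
The plan is to use a two-step argument: first condition on the sample to expose the bias as a difference involving a sample average inside a concave function, then apply Jensen's inequality at the level of the optimal value. Concretely, for any fixed $u\in U$, write $\xi_N(u) = \frac{1}{N}\sum_{i=1}^N f_2(u,X_i)$, so that $\Eb[\xi_N(u)] = \Eb[f_2(u,X)]$ by linearity of expectation (here I am using that $f_2$ is $P$-integrable in its last argument, as assumed throughout). Since $f_1(u,\cdot)$ is concave, Jensen's inequality gives $\Eb\big[f_1(u,\xi_N(u))\big] \le f_1\big(u,\Eb[\xi_N(u)]\big) = f_1\big(u,\Eb[f_2(u,X)]\big)$ for every $u\in U$.

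Next I would pass to the minimum. By definition $\vartheta_E^{(N)} = \min_{u\in U} f_1(u,\xi_N(u)) \le f_1(u,\xi_N(u))$ for every fixed $u\in U$ pointwise on the sample space, hence taking expectations $\Eb\big[\vartheta_E^{(N)}\big] \le \Eb\big[f_1(u,\xi_N(u))\big] \le f_1\big(u,\Eb[f_2(u,X)]\big)$ for every $u\in U$. Taking the infimum over $u\in U$ on the right-hand side yields $\Eb\big[\vartheta_E^{(N)}\big] \le \inf_{u\in U} f_1\big(u,\Eb[f_2(u,X)]\big) = \vartheta$, which is the claim. (The infimum is attained since $U$ is compact and $f_1$ is continuous in its first argument, though attainment is not even needed — the infimum bound suffices.)

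The only genuine technical point, and the step I would be most careful about, is ensuring that $\vartheta_E^{(N)}$ is a well-defined, measurable, integrable random variable so that $\Eb[\vartheta_E^{(N)}]$ makes sense: measurability of $u\mapsto f_1(u,\xi_N(u))$ jointly with the sample follows from continuity of $f_1$ in its first argument and measurability in the data, and the min over the compact set $U$ is then measurable by a standard selection/measurable-maximum argument; integrability follows because the chain of inequalities above already sandwiches $\vartheta_E^{(N)}$ between $-\infty$ and the finite constant $\vartheta$ from one side, and a symmetric bound (e.g.\ plugging in a fixed $u_0$ and using integrability of $f_2$) controls it from the other. Everything else is a one-line application of Jensen plus the elementary fact that the expectation of a minimum is at most the minimum of expectations; there is no real obstacle here.
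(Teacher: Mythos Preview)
Your proof is correct and follows essentially the same approach as the paper: Jensen's inequality applied to the concave map $f_1(u,\cdot)$ together with the elementary interchange $\Eb[\min_u(\cdot)]\le\min_u\Eb[\cdot]$. The only difference is presentational---you bound $\vartheta_E^{(N)}$ by $f_1(u,\xi_N(u))$ first and take the infimum at the end, whereas the paper takes minima on both sides of the Jensen inequality and then swaps $\min$ and $\Eb$; your added remarks on measurability and integrability are extra care the paper omits.
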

\begin{proof}
Let a $X_1, X_2,\dots, X_N$ be a sample from the distribution of the random vector $X.$ Using the concavity of $f_1$ with respect to the second argument, we apply Jensen's inequality to obtain for all $u\in U$
\[
f_1(u,\Eb[f_2^j(u,X)]) = f_1\Big(u,\Eb\big[ \frac{1}{N}\sum_{i=1}^N f_2(u,X_i)\big]\Big)\geq
\mathbb{E} \Big[f_1\Big(u, \frac{1}{N}\sum_{i=1}^N f_2(u,X_i)\Big)\Big].
\]
Now, we use the fact that the minima of the functions follow the same relation. We obtain the following chain of inequalities:
\begin{multline*}
\vartheta = \min_{u\in U} f_1(u,\Eb[f_2(u,X)]) \geq
\min_{u\in U} \mathbb{E} \Big[ f_1\Big(u, \frac{1}{N}\sum_{i=1}^N f_2(u,X_i)\Big)\Big]\\
\geq \mathbb{E} \min_{u\in U} f_1\Big(u, \frac{1}{N}\sum_{i=1}^N f_2(u,X_i)\Big)=\mathbb{E} [\vartheta_E^{(N)}]. \qquad  
\end{multline*}
\end{proof}

Let us consider the portfolio optimization problem (\ref{port-msd}) by using inverse measures of risk.  The function $f_1(u,z,\cdot)$  there is concave and Lemma~\ref{l:composite-bias-theta} applies. This means that solving the portfolio optimization problem by empirical estimation will provide a biased estimation of the risk measure for the optimal portfolio by underestimating its riskiness.  We shall show that the (partially) smoothed estimators improve the bias under certain conditions.

In what follows, we assume that a kernel $K$ satisfying (k1)-(k2) is used for smoothing.
\begin{theorem}
\label{t:kernel-bias-comp1}
Given an index set $J\subset{1,\dots k+1},$ let $l=\max\{ j: j\in J\}$.
Assume that for all $j\in J$, the functions $f_j$ are convex with respect to the last argument and the functions $f_j(u,\cdot, x)$, $j=1,\dots \ell-1$ are monotonically non-decreasing.
The following inequality is satisfied:
\begin{equation}
\label{composite-bias-lower}
\begin{gathered}
\varrho_{E}^{(N)}[u,X]\leq {\varrho}_{K}^{(N,J)} [u,X]  \;\; \text{a.s.} \text{ for all } u\in U,\\
\vartheta_{E}^{(N)}\leq \vartheta_{K}^{(N,J)}\quad \text{a.s.}
\end{gathered}
\end{equation}
\end{theorem}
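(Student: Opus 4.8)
The plan is to trace the two composite estimators through the nested expectations one level at a time, invoking Jensen's inequality exactly at the levels where the convolution is applied and using monotonicity of the outer functions to carry the resulting inequalities out to level $1$. The basic one-level observation is this: since the kernel order satisfies $s>1$, condition (k1) with $j=1$ gives $\int y_l K(y)\,dy=0$ for each $l$, so the probability measure $\mu_N$ with $d\mu_N(x)=h_N^{-m}K(x/h_N)\,dx$ has vanishing mean, $\int z\,d\mu_N(z)=0$. Consequently, for any $g$ that is convex in its last argument and any data point $X_i$,
\[
\int g(X_i+z)\,d\mu_N(z)\ \geq\ g\Big(X_i+\int z\,d\mu_N(z)\Big)=g(X_i),
\]
so that the smoothed average dominates the empirical one: $[P_N*\mu_N]\,g(X)\geq P_N\,g(X)$, componentwise for vector-valued $g$. (Finiteness of the $\mu_N$-integrals here is inherited from the paper's standing integrability assumptions together with the sublinear growth of functions in $\Fft$, as in the proof of Theorem~\ref{metric-Lipschitz}.)

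Next I would introduce the nested partial evaluations. For the empirical estimator set $\xi^E_{k+1}(u)=\bar{f}_{k+1}^{P_N}(u)$ and $\xi^E_j(u)=\bar{f}_j^{P_N}(u,\xi^E_{j+1}(u))$ for $j=k,\dots,1$, so that $\varrho^{(N)}_E[u,X]=\xi^E_1(u)$; analogously set $\xi^K_{k+1}(u)=\bar{f}_{k+1}^{Q_N^{k+1}}(u)$ and $\xi^K_j(u)=\bar{f}_j^{Q_N^j}(u,\xi^K_{j+1}(u))$, where $Q_N^j=P_N*\mu_N$ for $j\in J$ and $Q_N^j=P_N$ otherwise, so that $\varrho^{(N,J)}_K[u,X]=\xi^K_1(u)$. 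I would then prove, by downward induction on $j$, that $\xi^K_j(u)\geq\xi^E_j(u)$ componentwise for $j=1,\dots,\ell$ and $\xi^K_j(u)=\xi^E_j(u)$ for $j=\ell+1,\dots,k+1$. For $j>\ell$ we have $j\notin J$ because $\ell=\max J$, hence $Q_N^j=P_N$ in both estimators and equality propagates by plain substitution from the innermost level. At level $\ell$, using $\xi^K_{\ell+1}=\xi^E_{\ell+1}$ and the convexity of $f_\ell$ in $x$, the one-level observation yields $\xi^K_\ell\geq\xi^E_\ell$. For $j<\ell$, assuming $\xi^K_{j+1}\geq\xi^E_{j+1}$, monotonicity of $f_j(u,\cdot,x)$ lets me replace $\xi^K_{j+1}$ by $\xi^E_{j+1}$ inside $f_j$, after which either the one-level observation (if $j\in J$, by convexity of $f_j$ in $x$) or equality of measures (if $j\notin J$) gives $\xi^K_j\geq\xi^E_j$. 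In particular $\xi^K_1(u)\geq\xi^E_1(u)$, which is exactly the scalar inequality $\varrho^{(N)}_E[u,X]\leq\varrho^{(N,J)}_K[u,X]$, almost surely and for every $u\in U$.

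The statement about optimal values would then follow immediately: pointwise ordering of the objectives on $U$ is inherited by their infima, so $\vartheta^{(N)}_E=\min_{u\in U}\varrho^{(N)}_E[u,X]\leq\varrho^{(N,J)}_K[u,X]$ for every $u$, whence $\vartheta^{(N)}_E\leq\min_{u\in U}\varrho^{(N,J)}_K[u,X]=\vartheta^{(N,J)}_K$ almost surely.

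The main obstacle is organizational rather than deep: I must make sure the monotonicity hypothesis is used precisely where it is granted, namely at levels $1,\dots,\ell-1$, while observing that at level $\ell$ itself the two inner arguments already coincide, so only convexity of $f_\ell$ in $x$ is needed there — this is exactly why the theorem assumes monotonicity only for $j\le\ell-1$. The one genuinely analytic point to be careful about is the finiteness of the $\mu_N$-integrals defining the smoothed functionals, which is what makes Jensen's inequality legitimate; this is covered by the growth estimates already established in the proof of Theorem~\ref{metric-Lipschitz}.
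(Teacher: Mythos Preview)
Your proof is correct and follows essentially the same route as the paper: apply Jensen's inequality (using that $\mu_N$ is a probability measure with vanishing mean by (k1)) at each smoothed level $j\in J$ to obtain the one-level domination, then propagate these inequalities outward through the composition using the monotonicity of $f_j(u,\cdot,x)$ for $j\le\ell-1$, and finally pass to minima over $U$. Your downward induction with the auxiliary quantities $\xi^E_j,\xi^K_j$ is a cleaner and more explicit organization of the same argument than the paper's somewhat terse presentation, and your observation that monotonicity is not needed at level $\ell$ itself (since the inner arguments coincide there) matches the hypothesis exactly.
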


\begin{proof}
Consider $j\in J.$
We set $\frac{x-X_i}{h_N}=z$, $i=1, \cdots,N$ and use (k1) and the Jensen's inequality to obtain for all $i$ the following inequality:
\begin{equation*}
 \int f_{j}(u,\eta_j, x)K(\frac{x-X_i}{h_N})\frac{1}{h_N^m}dx
        = \int f_{j}(u,\eta_j,X_i+h_Nz)K(z)dz
        \geq f_{j}(u,\eta_j,X_i).
\end{equation*}
Taking the average, we get
\begin{equation*}
        \frac{1}{N}\sum_{i=1}^{N} \int f_{j}(u,\eta_j,x)K(\frac{x-X_i}{h_N})\frac{1}{h_N}dx
        \geq \frac{1}{N}\sum_{i=1}^N f_{j}(u,\eta_j,X_i).
\end{equation*}
If $j=k+1,$ then the last argument is omitted.
If $j>1$, then the monotonicity of the function $f_{j-1}(u,\cdot, x)$ entails the following a.s. inequality for all $\eta_j\in I_j$:
\begin{equation}
\label{j-ineq}
    f_{j-1}\left(u,\sum_{i=1}^N\frac{1}{N}\int f_j(u,\eta_j, x)K(\frac{x-X_i}{h_N})\frac{1}{h_N}dx, X \right)  \geq f_{j-1}\left(u, \frac{1}{N}\sum_{i=1}^N f_j(u,\eta_j, X_i), X \right).
\end{equation}
We state inequality \eqref{j-ineq} for $l-1$ and
repeat the argument for $j\in J$, $j<l$ using the monotonicity assumption. We obtain $ \varrho_{K}^{(N,J)}[u,X]\geq \varrho_{E}^{(N)} [u,X],$
    which is the first inequality in the statement.
We also observe that the minimum values of the two functions at the right and left hand side of the last chain of inequalities, taken with respect to $u\in U$, are related in the same way. Hence, we infer
  \[
  \min_{u \in U} \, \varrho_{K}^{(N,J)} [u,X] \geq \min_{u \in U}\,  \varrho_{E}^{(N)} [u,X]
    \]
Therefore,
    $\vartheta^{(N,J)}_K \geq \vartheta_E^{(N)}$ a.s. as stated.
  \end{proof}

Under the assumptions of the theorem, we have
\begin{equation}
\label{composite-bias-lowerbound}
\Eb[\vartheta_E^{(N)}]-\vartheta\leq \Eb[\vartheta_{K}^{(N,J)}] -\vartheta.
\end{equation}
Therefore a negative bias of the empirical estimator can be improved.\\

We denote by $\rho_w^{(N,J)}[u,X]$ the risk of the wavelet-based estimator of the risk and by $\vartheta_w^{(N)}$ the wavelet-based estimator of the optimal value $\vartheta.$
\begin{theorem}
\label{t:wavelet-bias-comp1}
Suppose the asumptions of Theorem~\ref{t:kernel-bias-comp1} are satisfied.
Also assume that the function $\phi(\cdot)$ in the construction of the estimator $\tilde{d}_{N,j^{*}}$ satisfies conditions (w1) and (w2) from Section \ref{s:wave}.
Then the following relations hold:
\begin{equation}
\label{wave-bias-lower}
\begin{gathered}
\varrho_{E}^{(N)}[u,X]\leq {\varrho}_{w}^{(N,J)} [u,X]  \;\; \text{a.s.} \text{ for all } u\in U,\\
\vartheta_{E}^{(N)}\leq \vartheta_{w}^{(N,J)}\quad \text{a.s.}
\end{gathered}
\end{equation}
\end{theorem}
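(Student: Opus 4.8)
The plan is to transcribe the proof of Theorem~\ref{t:kernel-bias-comp1} almost verbatim, the only substantive change being that the step which used the moment conditions (k1)--(k2) of the kernel is replaced by an argument based on the shape-preservation conditions (w1)--(w2) of $\phi$. It suffices to establish the per-level inequality: for every $j\in J$, every $u\in U$ and every $\eta_j\in I_j$,
\begin{equation*}
\int_{\Xc} f_j(u,\eta_j,x)\,\widetilde{d}_{N,j^*}(x)\,dx \;\ge\; \frac1N\sum_{i=1}^{N} f_j(u,\eta_j,X_i)\qquad\text{a.s.},
\end{equation*}
together with the analogue for $f_{k+1}(u,\cdot)$ when $k+1\in J$. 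Granting this, the monotonicity of $f_j(u,\cdot,x)$ for $j=1,\dots,\ell-1$ (with $\ell=\max J$) propagates the inequality outward through the nested composition exactly as in the passage around \eqref{j-ineq} in the proof of Theorem~\ref{t:kernel-bias-comp1}, yielding $\varrho_E^{(N)}[u,X]\le\varrho_w^{(N,J)}[u,X]$ a.s.\ for all $u\in U$; minimizing both sides over $u\in U$ then gives $\vartheta_E^{(N)}\le\vartheta_w^{(N,J)}$ a.s.

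To prove the per-level inequality I would first record the structural fact that, at a fixed resolution level, the estimator \eqref{wave} (in the normalization that integrates to one, cf.\ \eqref{newestimatorwave}) is a \emph{mixture of probability densities centered at the observations}. In dimension $m=1$, write $\widetilde{d}_{N,j^*}=\frac1N\sum_{i=1}^{N}g_i$ with $g_i(x)=2^{j^*}\sum_{\ell\in\Zb}\phi(2^{j^*}X_i-\ell)\,\phi(2^{j^*}x-\ell)$. Then $g_i\ge 0$ since $\phi\ge 0$; $\int g_i=1$ since $\sum_{\ell}\phi(\cdot-\ell)\equiv 1$ and $\int\phi=1$, both consequences of (w1); and, with the substitution $y=2^{j^*}x$,
\begin{equation*}
\int_{\Rb} x\,g_i(x)\,dx \;=\; 2^{-j^*}\sum_{\ell\in\Zb}\ell\,\phi(2^{j^*}X_i-\ell)\;=\;X_i,
\end{equation*}
where the identities $\int y\,\phi(y-\ell)\,dy=\ell$ and $\sum_{\ell}\ell\,\phi(x-\ell)=x$ are exactly what (w2) provides. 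The compact support of $\phi$ makes each sum over $\ell$ finite and each $g_i$ compactly supported, so all integrals are finite. For $m>1$ one uses the tensor-product construction: $g_i$ is the product of the $m$ one-dimensional densities attached to the coordinates of $X_i$, hence again a probability density, and $\int x\,g_i(x)\,dx=X_i$ by Fubini. Since $f_j(u,\eta_j,\cdot)$ is convex and continuous on $\Rb^m$, Jensen's inequality applied to $g_i$ gives $\int f_j(u,\eta_j,x)g_i(x)\,dx\ge f_j\big(u,\eta_j,\int x\,g_i(x)\,dx\big)=f_j(u,\eta_j,X_i)$; averaging over $i$ yields the per-level inequality (and likewise for $f_{k+1}$, whose middle argument is simply absent).

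With the per-level inequality available, the remainder reproduces the corresponding part of the proof of Theorem~\ref{t:kernel-bias-comp1}: one starts at the innermost smoothed level $\ell=\max J$ (the levels $\ell+1,\dots,k+1$ are empirical on both sides, so the two innermost subcompositions coincide) and lifts the inequality one level at a time, using at level $j$ that $\bar{f}_j^{P_N}(u,\cdot)$ is non-decreasing (inherited from the monotonicity of $f_j(u,\cdot,x)$, $j=1,\dots,\ell-1$) and, when $j\in J$, additionally the per-level inequality; after reaching level $1$ one obtains $\varrho_w^{(N,J)}[u,X]\ge\varrho_E^{(N)}[u,X]$ a.s.\ for all $u\in U$, and passing to minima over $u$ finishes the argument.

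The principal obstacle is the structural lemma of the second paragraph: identifying (w1)--(w2) as precisely the statements that the dyadic translates $\{\phi(2^{j^*}\cdot-\ell)\}_{\ell}$ form a partition of unity whose attached masses reproduce the first moment of the data, and getting the dilation factor $2^{j^*}$ (and the tensor normalization in higher dimension) right. This is the wavelet counterpart of the vanishing-first-moment property that (k1) delivers for kernels; once it is in place the bias comparison is a mechanical repetition of the kernel case.
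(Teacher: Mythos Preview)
Your proposal is correct and follows the same architecture as the paper's proof: establish the per-level inequality $\int f_j(u,\eta_j,x)\,\widetilde d_{N,j^*}(x)\,dx\ge \tfrac1N\sum_i f_j(u,\eta_j,X_i)$ from convexity together with (w1)--(w2), then propagate outward via the monotonicity of $f_{j-1}(u,\cdot,x)$ exactly as in Theorem~\ref{t:kernel-bias-comp1}. The only difference is cosmetic: you package (w1)--(w2) as the statement that each $g_i$ is a probability density with mean $X_i$ and then invoke Jensen's inequality, whereas the paper linearizes $f_j(u,\eta_j,\cdot)$ at $X_i$ via the subgradient inequality and verifies by direct computation that the constant term integrates to $f_j(u,\eta_j,X_i)$ (using (w1)) and the linear term integrates to zero (using (w2)); these are two equivalent renderings of the same convexity argument.
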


\begin{proof}
Consider $j\in J.$
Fix $u\in U.$ For each $i=1,\dots, N$ in the representation
\[
\int_{\Xc} f_j(u,\eta_j,x)\tilde{d}_{N,j^{*}}(x)dx =  \frac{2^{j^{*}}}{N} \sum_{i=1}^N
\int_{\Xc} f_j(u,\eta_j,x) K(2^{j^{*}}X_i,2^{j^{*}}x) dx ,
\]
we apply the subgradient inequality at $X_i$. Let $g_j^i$ be any subgradient of $f_j(u,\eta_j,\cdot)$ at $X_i.$ We have $f_j(u,\eta_j,x)\geq f(u,\eta_j, X_i) + g_j^i(x-X_i)$ and we obtain
\begin{multline}
\frac{1}{N} \sum_{i=1}^N \int_{\Xc} f_j(u,\eta_j,x) K(2^{j^{*}}X_i,2^{j^{*}}x) dx
\geq \\
\frac{1}{N}\sum_{i=1}^N
\int_{\Xc} f_j(u,\eta_j,X_i) K(2^{j^{*}}X_i,2^{j^{*}}x) dx +
\frac{1}{N}\sum_{i=1}^N
\int_{\Xc} g_j^i(x-X_i) K(2^{j^{*}}X_i,2^{j^{*}}x) dx \\
=\frac{1}{N}\sum_{i=1}^N f_j(u,\eta_j,X_i)\sum_{\ell\in \mathbb{Z}}\phi(2^{j^{*}}X_i-\ell) +\qquad\\
\frac{1}{N}\sum_{i=1}^N \int_{\Xc} g_j^i\Big[\sum_{\ell\in \mathbb{Z}}2^{j^{*}}(x-X_i)\phi(2^{j^{*}}x-\ell)\phi(2^{j^{*}}X_i-\ell)\Big]dx.
\end{multline}
Under the Assumptions (w1)-(w2), we have obviously
$\sum_{\ell\in \mathbb{Z}}\phi(2^{j^{*}}X_i-\ell)=1.$
Hence the right-hand side simplifies further, i.e.,
\begin{multline}
\frac{1}{N}\sum_{i=1}^N \int_{\Xc} g_j^i \Big[\sum_{\ell\in \mathbb{Z}}2^{j^{*}}(x-X_i)\phi(2^{j^{*}}x-\ell)\phi(2^{j^{*}}X_i-\ell)\Big]dx =\\
\frac{1}{N}\sum_{i=1}^N  g_j^i \Big[\sum_{\ell \in \mathbb{Z}}  2^{-j^{*}}\phi(2^{j^{*}}X_i-\ell)\int_{\Xc}\phi(2^{j^{*}}x-\ell)(2^{j^{*}}x\pm \ell)d(2^{j^{*}}x) - \\
X_i \sum_{\ell\in\mathbb{Z}}\int\phi(2^{j^{*}} x-\ell)\phi(2^{j^{*}}X_i-\ell)\Big]d(2^{j^{*}}x-\ell)= \\
\frac{1}{N}\sum_{i=1}^N  g_j^i\Big[\sum_{\ell\in \mathbb{Z}}2^{-j^{*}}\phi(2^{j^{*}}X_i-\ell)\ell-X_i\Big] =0.
\end{multline}
Therefore,  we get that for each fixed $u$,
\[
\int_{\Xc}  f_j(u,\eta_j,x)\tilde{d}_{N, j^{*}}(x)dx \geq \frac{1}{N}\sum_{i=1}^N f_j(u,\eta_j,x).
\]
The remaining part of the proof follows precisely the same steps as in the proof of Theorem~\ref{t:kernel-bias-comp1}.
If $j=k+1,$ then the last argument is omitted.
If $j>1$, then the monotonicity of the function $f_{j-1}(u,\cdot, x)$ entails the following a.s. inequality for all $\eta_j\in I_j$:
\begin{equation}
\label{j-ineq-w}
    f_{j-1}\left(u,\tilde{d}_{N,{j^{*}}}, X \right)  \geq f_{j-1}\left(u, \frac{1}{N}\sum_{i=1}^N f_j(u,\eta_j, X_i), X \right).
\end{equation}
We state inequality \eqref{j-ineq-w} for $l-1$ and
repeat the argument for $j\in J$, $j<l$ using the monotonicity assumption. We obtain $ \varrho_{w}^{(N,J)}[u,X]\geq \varrho_{E}^{(N)} [u,X],$
    which is the first inequality in the statement.
We also observe that the minimum values of the two functions at the right and left hand side of the last chain of inequalities, taken with respect to $u\in U$, are related in the same way. Hence, we infer
  \[
  \vartheta^{(N,J)}_w= \min_{u \in U} \, \varrho_{w}^{(N,J)} [u,X] \geq \min_{u \in U}\,  \varrho_{E}^{(N)} [u,X] = \vartheta_E^{(N)}
    \]
    as stated.
  \end{proof}
Under the assumptions of the theorem, we have
$\Eb[\vartheta_E^{(N)}]-\vartheta\leq \Eb[\vartheta_{w}^{(N,J)}] -\vartheta,$ which implies again
that a negative bias of the empirical estimator can be improved by using the wavelet-like estimator.
\begin{theorem}
\label{t:kernel-bias-comp2}
Let a sample $X_1,X_2, \dots, X_N$ and an index set $J\subset{1,\dots k+1}$ be given. Suppose  that for each $j\in J$,
the function $f_j$ has a modulus of continuity $w_j^x(\cdot)$ with respect to the last argument and the function $f_{j-1}(u, \cdot, x)$, $j>1$, has a modulus of continuity $w_j^\eta(\cdot)$. Furthermore, assume that $w_j^x(\|\cdot\|)$ and $w_j^\eta(\|\cdot\|)$  do not dependent of the other arguments of the respective functions, and that they are integrable with respect to $K$. Then for every positive $\varepsilon$, a number $h_N^*$ exists such that for all $h\leq h_N^*$, it holds,
\begin{equation}
\label{bias-upperbound-epsilon}
\begin{gathered}
\varrho_{K}^{(N,J)}[u,X]\leq {\varrho}_E^{(N)} [u,X] + \varepsilon \;\; \text{a.s.} \text{ for all } u\in U,\\
{\vartheta}_{K}^{(N,J)}\leq {\vartheta}_E^{(N)} +  \varepsilon \;\; \text{a.s.}.
\end{gathered}
\end{equation}
If the moduli of continuity are of form $w^\eta_j(t)= \ell_j t^{\alpha_j}$ and
$w^x_j(t)= \ell_j^x t^{\beta_j}$ with $\beta_j$ less or equal to the order of the kernel $K$, then positive constants $L$ and $\alpha$ exist such that the following upper bound holds:
\begin{equation}
\label{bias-upperbound}
\begin{gathered}
\varrho_{K}^{(N,J)}[u,X]\leq {\varrho}_E^{(N)} [u,X] +  L h_N^\alpha \;\; \text{a.s.} \text{ for all } u\in U,\\
{\vartheta}_{K}^{(N,J)}\leq {\vartheta}_E^{(N)} +  L h_N^\alpha \;\; \text{a.s.}.
\end{gathered}
\end{equation}
\end{theorem}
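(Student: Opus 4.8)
The strategy is to bound the smoothing error one nesting level at a time and then to carry these errors through the outer functions using their moduli of continuity; the Hölder case \eqref{bias-upperbound} is then a matter of inserting the explicit power laws. Fix a sample realization, fix $u\in U$, and for $j\in J$ set $\delta_j(h_N)=\int w_j^x(h_N\|z\|)\,K(z)\,dz$ (with the convention $\delta_j(h_N)=0$ for $j\notin J$). The substitution $z=(x-X_i)/h_N$, the triangle inequality, and the bound $\|f_j(u,\eta_j,X_i+h_Nz)-f_j(u,\eta_j,X_i)\|\le w_j^x(h_N\|z\|)$ give, for $j\in J$, $j\le k$,
\[
\Big\|\frac1N\sum_{i=1}^N\int f_j(u,\eta_j,x)K\big(\tfrac{x-X_i}{h_N}\big)\tfrac1{h_N^m}\,dx-\frac1N\sum_{i=1}^N f_j(u,\eta_j,X_i)\Big\|\le \delta_j(h_N),
\]
and likewise for $j=k+1$ with the last argument suppressed. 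Since $w_j^x(\|\cdot\|)$ is $K$-integrable and $w_j^x$ is continuous at $0$ with $w_j^x(0)=0$, dominated convergence gives $\delta_j(h_N)\to0$ as $h_N\downarrow0$.

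Next I would propagate these per-level errors through the composition, in the spirit of the proof of Theorem~\ref{t:kernel-bias-comp1} but with monotonicity replaced by quantitative continuity. Writing $\zeta_j^E,\zeta_j^K$ for the intermediate values produced at level $j$ by the empirical and the partially smoothed estimators (so that $\zeta_1^E=\varrho_E^{(N)}[u,X]$ and $\zeta_1^K=\varrho_K^{(N,J)}[u,X]$), one has the base estimate $\|\zeta_{k+1}^K-\zeta_{k+1}^E\|\le\delta_{k+1}(h_N)$ and, adding and subtracting $\frac1N\sum_i f_j(u,\zeta_{j+1}^K,X_i)$ and using that $f_j(u,\cdot,x)$ has modulus of continuity $w_{j+1}^{\eta}$, the recursion
\[
\|\zeta_j^K-\zeta_j^E\|\le \delta_j(h_N)+w_{j+1}^{\eta}\big(\|\zeta_{j+1}^K-\zeta_{j+1}^E\|\big),\qquad j=k,k-1,\dots,1 .
\]
Unwinding the recursion bounds $\varrho_K^{(N,J)}[u,X]-\varrho_E^{(N)}[u,X]=\zeta_1^K-\zeta_1^E$ by a finite sum of nested compositions of the moduli $w_{\cdot}^{\eta}$ evaluated at the quantities $\delta_j(h_N)$. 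As the $\delta_j(h_N)$ tend to $0$ and each $w_i^{\eta}$ is continuous at $0$ with $w_i^{\eta}(0)=0$, this whole bound tends to $0$ as $h_N\downarrow0$, and it does not depend on $u$ because, by hypothesis, none of the moduli depends on the remaining arguments. Given $\varepsilon>0$, picking $h_N^*$ so that the bound stays below $\varepsilon$ for all $h\le h_N^*$ yields the first line of \eqref{bias-upperbound-epsilon}; minimizing both sides over $u\in U$ (monotonicity of $\min$) yields the second, and since all steps are deterministic the inequalities hold surely, hence a.s.

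For the rate \eqref{bias-upperbound} I would substitute the Hölder forms. With $w_j^x(t)=\ell_j^x t^{\beta_j}$ and $\beta_j\le s$, the kernel moment $m_{\beta_j}(K)=\int\|z\|^{\beta_j}K(z)\,dz$ is finite by the remark following (k1)--(k2), so $\delta_j(h_N)=\ell_j^x m_{\beta_j}(K)\,h_N^{\beta_j}$ is a power of $h_N$. Feeding a power $h_N^{c}$ into $w_i^{\eta}(t)=\ell_i t^{\alpha_i}$ returns $\ell_i h_N^{c\alpha_i}$, again a power of $h_N$; hence each term of the finite sum obtained above equals a constant times $h_N^{\gamma}$, where $\gamma$ is the product of the corresponding $\beta_j$ with the exponents $\alpha_i$ along the chain from that level up to level $1$. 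Letting $\alpha>0$ be the smallest of these exponents and $L$ the sum of the accompanying constants (built from the $\ell$'s and the kernel moments, after using $h_N^{\gamma-\alpha}\le1$ for small $h_N$), one obtains $\varrho_K^{(N,J)}[u,X]\le\varrho_E^{(N)}[u,X]+Lh_N^{\alpha}$ uniformly in $u$, and minimizing over $u$ gives the bound for $\vartheta$.

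The routine pieces are the single-level estimate and the minimization-over-$u$ step; the part that needs care is the propagation recursion — tracking how the per-level perturbations compound through the nested moduli of continuity and checking that the composite exponent $\alpha$ in \eqref{bias-upperbound} stays strictly positive. I would also note that the argument implicitly uses a modulus of continuity $w_i^{\eta}$ for $f_{i-1}(u,\cdot,x)$ at every level $2\le i\le l=\max J$, not only for $i\in J$, just as the monotonicity hypothesis of Theorem~\ref{t:kernel-bias-comp1} is imposed on all $f_j$ with $j<\ell$; on the compact sets $I_j$ such moduli always exist, and only their explicit power-law form is needed to obtain the quantitative rate \eqref{bias-upperbound}.
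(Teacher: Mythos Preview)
Your proposal is correct and follows essentially the same approach as the paper: bound the per-level smoothing error via the modulus $w_j^x$ and the substitution $z=(x-X_i)/h_N$, propagate these errors outward through the moduli $w^\eta$, and then minimize over $u$. Your recursion $\|\zeta_j^K-\zeta_j^E\|\le\delta_j(h_N)+w_{j+1}^\eta(\|\zeta_{j+1}^K-\zeta_{j+1}^E\|)$ is a cleaner packaging of the same induction the paper spells out via case analysis (the paper treats $l=1$, $l>1$ with $l-1\notin J$, and $l>1$ with $l-1\in J$ separately, and for the H\"older rate only writes out the two-level case explicitly).
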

\begin{proof}
Let $l$ be the largest index in $J$.
If $l=1$, then
\begin{multline*}
    \sum_{i=1}^N\frac{1}{N}\int f_1\left(u,\eta_1, X_i +h_Nz_1 \right) K(z_1)dz_1 - \; \sum_{i=1}^N\frac{1}{N} f_1\left(u, \eta_1, X_i \right)  \\
    = \sum_{i=1}^N\frac{1}{N}\int \Big[ f_1 \Big(u,\eta, X_{i_1} +h_Nz_1 \Big)   - f_1\Big(u, \eta, X_{i_1} \Big)
    \Big] K(z_1)dz_1
    \leq \int w_1^x(h_N\|z_1 \|) K(z_1)dz_1.
    \end{multline*}
Choosing the bandwidth small enough, we obtain the right-hand side of the  inequality smaller than any fixed number $\varepsilon >0.$\\
We analyse the case of $l>1$ and $l-1\not\in J$.
\begin{multline*}
    \sum_{i_1=1}^N\frac{1}{N}f_{l-1}\left(u,\sum_{i_2=1}^N\frac{1}{N}\int f_{l}(u,X_{i_2}+h_Nz_2)K(z_2)dz_2, X_{i_1}  \right) - \sum_{i_1=1}^N\frac{1}{N} f_{l-1}\left(u, \frac{1}{N}\sum_{i_2=1}^N f_{l}(u,X_{i_2}), X_{i_1} \right)  \\
    \leq  \sum_{i_1=1}^N\frac{1}{N}
  w_{l-1}^\eta\left(\Big\|\frac{1}{N}\sum_{i_2=1}^N\int f_{l}(u,X_{i_2}+h_Nz_2)K(z_2)dz_2 - \frac{1}{N}\sum_{i_2=1}^N f_{l}(u,X_{i_2}) \Big\| \right) \\
  \leq w_{l-1}^\eta\left(
\frac{1}{N}\sum_{i_2=1}^N\int w_l^x\big( \|h_Nz_2\|) K(z_2)dz_2\right)
   = w_{l-1}^\eta\left(
\int w_l^x\big(h_N\|z_2\|) K(z_2)dz_2\right).
\end{multline*}
Choosing small bandwidth $h_N$, we force $w_l^x\big(h_N\|z_2\|)\leq \varepsilon_1$ small enough to imply $w_{l-1}^\eta(\varepsilon_1)\leq \varepsilon$.\\
It remains to consider the case of $l>1$ and $l-1\in J$.
The following chain of inequalities holds:
\begin{align}
    \sum_{i_1=1}^N\frac{1}{N}&\int f_{l-1}\left(u,\sum_{i_2=1}^N\frac{1}{N}\int f_{l}(u,X_{i_2}+h_Nz_2)K(z_2)dz_2, X_{i_1} +h_Nz_1 \right) K(z_1)dz_1 \notag \\ \notag
    &\qquad\qquad\qquad  - \; \sum_{i_1=1}^N\frac{1}{N} f_{l-1}\left(u, \frac{1}{N}\sum_{i_2=1}^N f_{l}(u,X_{i_2}), X_{i_1} \right)  \\ \notag
    = &\sum_{i_1=1}^N\frac{1}{N}\int \Big[ f_{l-1} \Big(u,\sum_{i_2=1}^N\frac{1}{N}\int f_{l}(u,X_{i_2}+h_Nz_2)K(z_2)dz_2, X_{i_1} +h_Nz_1 \Big)  \\ \notag
    &\qquad\qquad\qquad -\; f_{l-1}\Big(u, \frac{1}{N}\sum_{i_2=1}^N f_{l}(u,X_{i_2}), X_{i_1} +h_Nz_1 \Big)
    \Big] K(z_1)dz_1 \\ \notag
    & + \sum_{i_1=1}^N\frac{1}{N}\int \Big[ f_{l-1} \Big(u,\sum_{i_2=1}^N\frac{1}{N} f_{l}(u,X_{i_2}), X_{i_1} +h_Nz_1 \Big)  - f_{l-1}\Big(u, \frac{1}{N}\sum_{i_2=1}^N f_{l}(u,X_{i_2}), X_{i_1} \Big)
    \Big] K(z_1)dz_1\\ \notag
    \leq & \sum_{i_1=1}^N\frac{1}{N}\int w_{l-1}^\eta\left(\Big\|\frac{1}{N}\sum_{i_2=1}^N\int f_{l}(u,X_{i_2}+h_Nz_2)K(z_2)dz_2 - \frac{1}{N}\sum_{i_2=1}^N f_{l}(u,X_{i_2}) \Big\| \right) K(z_1)dz_1\\ \notag
    & \qquad + \sum_{i_1=1}^N\frac{1}{N}\int w_{l-1}^x\left(\big\|h_Nz_1 \big\| \right) K(z_1)dz_1\\ \notag
  \leq & \int w_{l-1}^\eta\left(
\frac{1}{N}\sum_{i_2=1}^N\int w_l^x\big( \|h_Nz_2\|) K(z_2)dz_2\right) K(z_1)dz_1 + \int w_{l-1}^x(h_N\|z_1 \|) K(z_1)dz_1\\
   = & w_{l-1}^\eta\left(
\int w_l^x\big(h_N\|z_2\|) K(z_2)dz_2\right) + \int w_{l-1}^x(h_N\|z_1 \|) K(z_1)dz_1 \label{generalcase}
\end{align}
Again, choosing the bandwidth small enough, we obtain the right-hand side of the  inequality smaller than any fixed number $\varepsilon >0.$

Proceeding from $l$ to the smallest index in $J$ by the same line of arguments, we obtain the first part of the statements.

Suppose now  that $w_1^x(t) = \ell_1 t^{\alpha_1}$,  $w_2^x(t) =\ell_2 t^{\alpha_2}$
and $w_1^\eta(t) = \ell_3 t^{\alpha_3}.$
Without loss of generality, we may assume that $\ell_1=\ell_2=\ell_3=\ell.$
We obtain at the right-hand side of inequality \eqref{generalcase}
the following quantity:
\begin{align*}
w_1^\eta\Big( &
\int w_2^x\big(h_N\|z_2\|) K(z_2)dz_2\Big)+ \int w_1^x(h_N\|z_1 \|) K(z_1)dz_1 \\
&\; = w_1^\eta \left(\ell h_N^{\alpha_2}\int \|z_2\|^{\alpha_2} K(z_2)dz_2\right)  + \ell h_N^{\alpha_1}\int\|z_1 \|^{\alpha_1} K(z_1)dz_1 \\
&\; = \ell \left(\ell h_N^{\alpha_2}m_{\alpha_2}(K)\right)^{\alpha_3}
  + \ell h_N^{\alpha_1}m_{\alpha_1}(K)
\end{align*}
Assuming that $h_N<1$, we obtain
\[
w_1^\eta\left(
\int w_2^x\big(h_N\|z_2\|) K(z_2)dz_2\right)+ \int w_1^x(h_N\|z_1 \|) K(z_1)dz_1
\leq  \ell h_N^\alpha \big( \ell^{\alpha_3} m_{\alpha_2}^{\alpha_3}(K) + m_{\alpha_1}(K)  \big).
\]
Setting $L= \ell\big( \ell^{\alpha_3} m_{\alpha_2}^{\alpha_3}(K) + m_{\alpha_1}(K)  \big)$, we obtain inequality \eqref{bias-upperbound} for the composite functional in the case of two functions.
Consequently,
\begin{align*}
\min_{u\in U} \varrho^{(N,J)}_K[u,X] &\leq
\varrho^{(N,J)}_K[u_{E},X] \leq  \varrho^{(N)}_E[u_{E},X] + Lh_N^\alpha \\
& = \min_{u\in U} \varrho^{(N)}_E[u,X] + Lh_N^\alpha\quad\text{a.s.} &  
\end{align*}
\end{proof}

We have observed that when the modulus of continuity is of H\"older form, then the upper bound has more explicit dependence on the sample size.
Under the assumptions of Theorem \ref{t:kernel-bias-comp1} and Theorem \ref{t:kernel-bias-comp2}, the kernel estimator with sufficiently small bandwidth $h_N$ is less biased than the empirical one, when the latter has negative bias.
\begin{theorem}
Under the assumptions of Theorem \ref{t:kernel-bias-comp1} and Theorem \ref{t:kernel-bias-comp2} with H\"older modulus of continuity,
a positive number $h^*_N$ exists, such that whenever $h_N\in (0,h^*_N)$ the following relations holds with the constants $L$ and $\alpha$ from \eqref{bias-upperbound}.
\begin{align}
 &\;\big|\Eb[{\vartheta}_{K}^{(N,J)} ]-\vartheta\big|
\leq \big|\Eb[\vartheta_E^{(N)}]-\vartheta\big|.
\label{lessbias}\\
 &\; \Eb\big|\vartheta_{K}^{(N,J)} -\Eb[\vartheta_{K}^{(N,J)}]\big| \leq \Eb \big|\vartheta_E^{(N)}-\Eb[\vartheta_E^{(N)}] \big| +  Lh_N^\alpha,\label{dispertion}\\
 &\; \Big(\Eb[(\vartheta_{K}^{(N,J)} -\Eb[\vartheta_{K}^{(N,J)}] )^2]\Big)^{\frac{1}{2}} \leq \Big(\Eb (\vartheta_E^{(N)}-\Eb[\vartheta_E^{(N)} )^2\Big)^{\frac{1}{2}} +  Lh_N^\alpha\label{deviation}\\
&\;\Big(\Eb\Big[\big(\vartheta_K^{(N,J)} - \vartheta  \big)^2\Big]\Big)^{\frac{1}{2}}\leq \Big(\Eb\Big[\big(\vartheta_E^{(N)} - \vartheta  \big)^2\Big]\Big)^{\frac{1}{2}} + Lh_N^{\alpha}.
\label{MSE-upper}
\end{align}
\end{theorem}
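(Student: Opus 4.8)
The plan is to reduce all four inequalities to a single almost sure envelope. Combining the lower comparison $\vartheta_E^{(N)}\le\vartheta_K^{(N,J)}$ a.s. from Theorem~\ref{t:kernel-bias-comp1} with the Hölder upper bound \eqref{bias-upperbound} of Theorem~\ref{t:kernel-bias-comp2}, we get
\[
0 \;\le\; R_N \;\le\; L h_N^\alpha \qquad \text{a.s.,}\qquad\text{where } R_N:=\vartheta_K^{(N,J)}-\vartheta_E^{(N)} .
\]
Consequently $0\le\Eb[R_N]\le L h_N^\alpha$, and, since $R_N$ and $\Eb[R_N]$ both lie in an interval of length $Lh_N^\alpha$, also $|R_N-\Eb[R_N]|\le L h_N^\alpha$ a.s. Integrability of $\vartheta_E^{(N)}$ and $\vartheta_K^{(N,J)}$ (so that the expectations and $L^2$-norms below are finite) is inherited from the $P$-integrable domination hypotheses of Theorems~\ref{t:kernel-bias-comp1}–\ref{t:kernel-bias-comp2}. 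Everything else follows by the triangle inequality in $L^1$ and Minkowski's inequality in $L^2$; no further property of the kernel estimator is used.

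For \eqref{lessbias}, take expectations in the envelope to obtain $\Eb[\vartheta_E^{(N)}]\le\Eb[\vartheta_K^{(N,J)}]\le\Eb[\vartheta_E^{(N)}]+Lh_N^\alpha$, which is \eqref{composite-bias-lowerbound}. Invoke the standing downward-bias property $\Eb[\vartheta_E^{(N)}]\le\vartheta$ of the empirical composite estimator (cf.\ Lemma~\ref{l:composite-bias-theta}); assuming this bias is nonzero, set $h_N^{*}:=\big((\vartheta-\Eb[\vartheta_E^{(N)}])/L\big)^{1/\alpha}$, so that for every $h_N\in(0,h_N^{*})$ one still has $\Eb[\vartheta_K^{(N,J)}]\le\vartheta$. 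Then both expectations lie below $\vartheta$, so their biases are $\vartheta-\Eb[\vartheta_K^{(N,J)}]$ and $\vartheta-\Eb[\vartheta_E^{(N)}]$, and $\Eb[\vartheta_K^{(N,J)}]\ge\Eb[\vartheta_E^{(N)}]$ yields $|\Eb[\vartheta_K^{(N,J)}]-\vartheta|\le|\Eb[\vartheta_E^{(N)}]-\vartheta|$. This is the only step where the hypotheses of Theorem~\ref{t:kernel-bias-comp1} and the (mild) non-degeneracy of the empirical bias are genuinely needed.

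For the remaining three, I would use additive decompositions and the a.s.\ bounds above. Inequality \eqref{dispertion}: write $\vartheta_K^{(N,J)}-\Eb[\vartheta_K^{(N,J)}]=\big(\vartheta_E^{(N)}-\Eb[\vartheta_E^{(N)}]\big)+\big(R_N-\Eb[R_N]\big)$, apply $\Eb|\cdot|$ and the triangle inequality, and bound $\Eb|R_N-\Eb[R_N]|\le Lh_N^\alpha$. Inequality \eqref{deviation}: the same decomposition with Minkowski's inequality in $L^2$, using $\big(\Eb[(R_N-\Eb[R_N])^2]\big)^{1/2}\le Lh_N^\alpha$. Inequality \eqref{MSE-upper}: decompose $\vartheta_K^{(N,J)}-\vartheta=\big(\vartheta_E^{(N)}-\vartheta\big)+R_N$ and apply Minkowski with $\big(\Eb[R_N^2]\big)^{1/2}\le Lh_N^\alpha$. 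Note that \eqref{dispertion}–\eqref{MSE-upper} hold for \emph{every} bandwidth $h_N$, with no smallness requirement.

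The main obstacle is \eqref{lessbias}: "less biased" is only meaningful after one checks that $\Eb[\vartheta_K^{(N,J)}]$ still underestimates $\vartheta$, which forces $h_N$ to be small relative to the assumed positive empirical bias — hence the existence of $h_N^{*}$ rather than a claim valid for all $h_N$. Once the almost sure envelope $0\le R_N\le Lh_N^\alpha$ is established, the other three inequalities are routine.
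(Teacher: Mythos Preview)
Your argument is correct and tracks the paper's proof closely: both start from the a.s.\ sandwich $\vartheta_E^{(N)}\le\vartheta_K^{(N,J)}\le\vartheta_E^{(N)}+Lh_N^\alpha$, assume the empirical bias $b:=\Eb[\vartheta_E^{(N)}]-\vartheta$ is strictly negative, and then reduce \eqref{dispertion}--\eqref{MSE-upper} to triangle/Minkowski inequalities applied to $R_N=\vartheta_K^{(N,J)}-\vartheta_E^{(N)}$ (the paper writes $U-V$ for your $R_N-\Eb[R_N]$). The only cosmetic difference is in \eqref{lessbias}: the paper takes $h_N^*$ so that $Lh_N^\alpha\le -2b$, which allows the kernel bias to swing into $[b,-b]$ (possibly positive) and still satisfy $|\Eb[\vartheta_K^{(N,J)}]-\vartheta|\le|b|$, whereas you choose the smaller threshold $Lh_N^\alpha\le -b$ to keep $\Eb[\vartheta_K^{(N,J)}]\le\vartheta$; either choice establishes the existence of $h_N^*$.
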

\begin{proof}
If the empirical estimator ${\vartheta}_E^{(N)}$ has a bias $b<0$, then
 for any  $\varepsilon\leq -2b$, we have
 \[
 b=\Eb[{\vartheta}_E^{(N)}]-\vartheta\leq
  \Eb[{\vartheta}_{K}^{(N,J)} ]-\vartheta \leq
  \Eb[{\vartheta}_E^{(N)} ] -\vartheta - 2b  = -b
\]
Thus
$\big|\Eb[{\vartheta}_{K}^{(N,J)}]-\vartheta\big|
\leq \big|\Eb[{\vartheta}_E^{(N)}]-\vartheta\big|$,
showing \eqref{lessbias}.
Recall the relations \eqref{bias-upperbound} and \eqref{composite-bias-lower}:
\begin{equation}
\label{basic_pointwise}
{\vartheta}_E^{(N)}\leq \vartheta_K^{(N,J)} \leq  {\vartheta}_E^{(N)} + Lh_N^\alpha.
\end{equation}
This entails additionally,
$-\Eb[{\vartheta}_E^{(N)}]-Lh_N^\alpha \leq -\Eb[\vartheta_K^{(N,J)}] \geq  -\Eb[{\vartheta}_E^{(N)}].$
Adding the two relations together, we get
\begin{equation}
\label{mainpointwise}
{\vartheta}_E^{(N)}-\Eb[{\vartheta}_E^{(N)}]-Lh_N^\alpha \leq \vartheta_K^{(N,J)} -\Eb[\vartheta_K^{(N,J)}]\leq  {\vartheta}_E^{(N)}-\Eb[{\vartheta}_E^{(N)}] + Lh_N^\alpha.
\end{equation}
We consider $U=\vartheta_{K}^{(N,J)}-\Eb[\vartheta_{K}^{(N,J)}]$ and $V={\vartheta}_E^{(N)}-\Eb[{\vartheta}_E^{(N)}]$ and infer that
\begin{align*}
\Eb\big|\vartheta_K^{(N,J)} -\Eb[\vartheta_K^{(N,J)}]\big| & = \Eb|U| \leq
\Eb|V| +
\Eb|U-V| \leq \Eb|V| + Lh_N^\alpha\\
& =
\Eb\big|{\vartheta}_E^{(N)} -\Eb[{\vartheta}_E^{(N)}]\big| +Lh_N^\alpha,
\end{align*}
which proves inequality \eqref{dispertion}.
Further, we consider $U$ and $V$ as random variables in $\mathcal{L}_2$. Using \eqref{mainpointwise} again and the triangle inequality for the norm, we get
\begin{align*}
\Big(\Eb[(\vartheta_{K}^{(N,J)} -\Eb[\vartheta_{K}^{(N,J)}] )^2]\Big)^{\frac{1}{2}}
& =\|U\|_{\Lc_2} \leq \|V\|_{\Lc_2}+ \|U-V\|_{\Lc_2} \\
&\leq \|V\|_{\Lc_2}+ Lh_N^\alpha = \Big(\Eb[({\vartheta}_E^{(N)} -\Eb[{\vartheta}_E^{(N)}] )^2]\Big)^{\frac{1}{2}} + Lh_N^\alpha.
\end{align*}
This proves \eqref{deviation}.
Now, we turn to the upper bound on the error of the kernel estimator.
Using inequalities \eqref{basic_pointwise}, we get
\[
|\vartheta_{K}^{(N,J)} -\vartheta| \leq |{\vartheta}_E^{(N)}-\vartheta| +
|\vartheta_{K}^{(N,J)} -{\vartheta}_E^{(N)}| \leq |{\vartheta}_E^{(N)}-\vartheta| + Lh_N^\alpha.
\]
Denoting $Y=|\vartheta_{K}^{(N,J)} -\vartheta|$ and $W=|{\vartheta}_E^{(N)}-\vartheta|$, we view $Y$ and $W$ as two non-negative random variables in $\mathcal{L}_2$.
Recall that $\mathcal{L}_2$ is a Banach lattice and therefore, the relation between $Y$ and $W$ entails the same relation between their
$\Lc_2$-norms, i.e.,
\[
\Big(\Eb\big[(\vartheta_{K}^{(N,J)} -\vartheta])^2\big]\Big)^{\frac{1}{2}}
= \|Y\|_{\Lc_2}\leq \|W\|_{\Lc_2} + Lh_N^\alpha =
\Big(\Eb\big[({\vartheta}_E^{(N)} -\vartheta])^2\big]\Big)^{\frac{1}{2}} + Lh_N^\alpha.
\]
This shows \eqref{MSE-upper} and completes the proof.
  \end{proof}

We note that the same results apply for the relations between ${\vartheta}_E^{(N)}$
and $\vartheta_{K}^{(N,J)},$ as defined in the beginning of this section.
\bigskip

\section{Applications to measures of risk}
\label{s:application}

We return to the examples discussed at the beginning of section~\ref{s:empiricalestimators}.  We have considered a mean-semi-deviation of order $p\ge 1$ and a higher order inverse measure of risk for a portfolio problem. We shall verify the applicability of our consistency results.
Recall that the mean-semi-deviation portfolio optimization has the objective
\[
\varrho_1[u,X] = -\Eb[u^\top X] + \kappa \Big[\Eb\big[\big(\max\{0,\Eb[u^\top X]-u^\top X\}\big)^p\big]\Big]^{\frac{1}{p}},\vspace*{-1ex}
\]
where $\kappa \in [0,1]$ and the random vector $X$ comprises the random returns of the potential investments.
We have represented this measure as a composition of three functions:
\begin{align*}
f_1(u,\eta_1,x) = - u^\top x + \kappa\eta_1^{\frac{1}{p}},\quad
f_2(u,\eta_2,x) = \big(\max\{0, \eta_2 -u^\top x\}\big)^p,\quad
f_3(u,x) = u^\top x. 
\end{align*}
The empirical, the smoothed, and the kernel estimators are consistent because
the assumptions of Theorem ~\ref{t:consistency} are obviously satisfied.

In order to obtain strong consistency, or to verify the strong law of large numbers for the smoothed estimators, we need to discuss the uniform continuity assumption. The functions $f_1$, and $f_3$ have a modulus of continuity $w(t)= c_1t$, where $c_1$ is the maximum of the norm of $u\in U$. Indeed,
\[
|f_1(u,\eta_1,x+ z) - f_1(u,\eta_1,x)| = |u^\top z |\leq c_1\|z\|,
\]
where the last inequality is the Cauchy-Schwartz inequality. The modulus is independent of $u$ and $\eta_1$ and it is $K$-integrable for any kernel $K$, satisfying assumptions (k1) and (k2) with $s>1$.

The function $f_2(u,\eta_2,x)$ for $p>1$ is continuously differentiable with respect to $x$. We use the mean-value theorem with $\zeta\in [0,1]$ to obtain
\begin{align*}
\Big|\big(\max\{0, \eta_2 -\langle u, x+z\rangle\}\big)^p & - \big(\max\{0, \eta_2 -\langle u, x\rangle\}\big)^p\Big| = |\langle\frac{\partial}{\partial x} f_2(u,\eta_2,x+\zeta z), z\rangle| \\
&\leq p\max |\eta_2-\langle u, x+\zeta z\rangle|^{p-1}
|\langle u, z\rangle|\\
&\leq p\max \big( |\eta_2| + \|u\|\|x+\zeta z\|)^{p-1}\| u\|\| z\|\\
&\leq pc_1 \max \big(1,(|c_2 + c_1\|x\|+c_1\|z\|)^{p-1}\big)\| z\|.
\end{align*}
In the last inequality, we have used the compactness of $I_2$, denoting $c_2 =\max\{ |\eta_2|; \eta_2\in I_2\} $ The existence of the modulus of continuity is guaranteed  when the second argument is in a compact set.
We also see, that we need a kernel with finite $p$-moment in order to apply our results.
Therefore, it may be useful to smooth only the first and the third function instead of all three functions.

In the case of $p=1$, we have the same modulus of continuity for  $f_2$ as for $f_1$ and  $f_3$. Indeed,
\[
\Big|\max\{0, \eta_2 -\langle u, x+z\rangle\} - \max\{0, \eta_2 -\langle u, x\rangle\}\Big| |\langle u, z\rangle |\leq c\|z\|.
\]
Considering the inverse measures of risk
$\varrho[Y]=\min_{z \in \mathbb{R}}\left\{ z+\frac{1}{\alpha} [\mathbb{E}(\max(0,Y-z)^q)]^{1/q}\right\},$
we note that similar analysis can be carried out as for the mean-semi-deviation model. In this case, the assumptions of Theorem~\ref{t:consistency} are satisfied while the strong consistency
results are applicable for the bounded case.
We conclude that the consistency results hold for risk-averse portfolio optimization using those risk measures.

Additionally, consider the bias when the portfolio problems \eqref{port-msd} and \eqref{port-hor} are based on sampled data.
We see that the assumptions of Theorem \ref{t:kernel-bias-comp1} are satisfied with smoothing all or some functions in the composition.
The function $\eta\to \big(\max\{0, \eta -u^\top x\}\big)^p$ is uniformly continuous with respect to $\eta$ in a compact set. We analyze the dependence of the modulus of continuity on $x$ in a similar way as above. We have
\begin{align*}
\Big|\big(\max\{0, \eta_1 -\langle u, x\rangle\}\big)^p & - \big(\max\{0, \eta_2 -\langle u, x\rangle\}\big)^p\Big| \leq p\max |\eta-\langle u, x\rangle|^{p-1}|\eta|\\
&\leq p\max \big( |\eta| + \|u\|\|x\|)^{p-1}|\eta|\\
&\leq pc_1 \max \big(1,(|c_2 + c_1\|x\|)^{p-1}\big)| \eta |.
\end{align*}
The assumptions of Theorem \ref{t:kernel-bias-comp2} are also satisfied with Lipschitz modulus of continuity when $\Xc$ is bounded.


\section{Numerical Results}
\label{s:numerical}

In our numerical study, we have used the higher order measure of risk.
\begin{equation}
\label{horm}
\varrho[X]=\min_{u \in \mathbb{R}}\Big\{ u+\frac{1}{\alpha} [\mathbb{E}(\max(0,X-u)^q)]^{1/q}\Big\},
\end{equation}
with parameter  $q=2$ and $\alpha=0.05.$ The functions representing $\varrho$  are:
\[
f_1(u,y)=u+\frac{1}{\alpha}u^{1/q},\qquad
 f_2(u,x)=[\max(0,x-u)]^q.
\]
We recall the resulting  plug-in estimator
\[
{\vartheta}_E^{(N)} =\min_{u\in \mathbb{R}} \left\{ u+\frac{1}{\alpha}\left[ \frac{1}{N}\sum_{i=1}^N (\max (0,X_i-u))^q\right]^{1/q}\right\}.
\]
As shown in \cite{dentcheva2017statistical}, the empirical estimator is asymptotically normally distributed. However, as discussed in this paper, it exhibits a pronounced downward bias. We have compared ${\vartheta}_E^{(N)}$ to the following two estimators:
\begin{gather}
\vartheta_K^{(N)}=\min_{u\in \mathbb{R}} \left\{ u+\frac{1}{\alpha}\left[ \frac{1}{N}\sum_{i=1}^N \int (\max (0,x-u))^q\frac{1}{h}K(\frac{x-X_i}{h})dx\right]^{1/q}\right\}
\\
\vartheta_w^{(N)}=\min_{u\in \mathbb{R}} \left\{ u+\frac{1}{\alpha}\left[  \int (\max (0,x-u))^q\tilde{d}_{N,j}(x)dx\right]^{1/q}\right\}.
\end{gather}

All across in the simulations below, we have estimated the bias based on 500 simulations. For the wavelet estimator, we have experimented with both  (\ref{linear}) and (\ref{quadratic}) forms of $\phi .$ Both of them are first order splines  in the sense that they are only orthogonal to polynomials of degree one.
For the kernel estimator, we have experimented with the Gaussian kernel,
the uniform kernel $K(x)=\frac{1}{2h_N}$ with support on $|x| \leq h_N$, and
the Epanechnikov kernel $K(x)=\frac{3}{4}(1-x^2)$ on the support: $|x| \leq h_N$.


In the first series of experiments, we simulated observations from  the normal distribution $X\sim \mathcal{N}(10,3)$. We took $\alpha=0.05$ and $q=2. $ The optimal $u^*=14.5048$ is determined using numerical integration end the resulting ``true" $\vartheta_0=15.5163$ as the value of the risk.

We use the bandwidth calculated according to the formula $1.06 \hat{\sigma} N^{-\frac{1}{5}}$, where $\hat{\sigma}$ is the estimated standard deviation of the data.

The numerical results for the kernel estimator are reported in the following tables.

\bigskip

\begin{center}
\begin{tabular}{ |c | c| c | c |c |c  |}
 \hline
 N &  kernel  & bias-kernel & variance-kernel & bias-plug-in & variance-plug-in\\
 \hline
 100 &  Uniform & -0.6095  & 0.5893 & -1.1896 & 0.5754  \\
  \hline
200 &  Uniform & -0.3930 & 0.5132 & -0.7891 & 0.5350 \\
  \hline
 500 &  Uniform & -0.1655  & 0.3482 & -0.3236 &  0.4099  \\
  \hline
  100 &   Epanechnikov & -0.7254  & 0.5813 & -1.1896 & 0.5754  \\
  \hline
 200 &  Epanechnikov & -0.4852  & 0.5168 & -0.7891 & 0.5350  \\
  \hline
 500 &   Epanechnikov &  -0.2164  & 0.3641 & -0.3236 & 0.4099  \\
  \hline
 100 &  Gaussian & -0.6095  &  0.5893 & -1.1896 &  0.5754 \\
  \hline
 200 &  Gaussian & -0.3930  &  0.5132 & -0.7891 &  0.5350 \\
 \hline
 500 &  Gaussian & -0.1655  & 0.3482 & -0.3236 &  0.4099  \\
  \hline
\end{tabular}
\end{center}

\bigskip

In our numerical experiments, we tried to check if using the wavelet-based estimator with the universal  resolution level  $j^*=\log _2 N /5 $ offers a good solution. We have the following results:

\bigskip

\begin{center}
\begin{tabular}{ |c | c | c |c |c  |}
 \hline
 N &   bias-wavelet & variance-wavelet & bias-plug-in & variance-plug-in\\
 \hline
 100 &   -0.6430  & 0.6054 & -1.1668 & 0.6375  \\
  \hline
200 &   -0.3728 & 0.4879 & -0.7677 & 0.5382 \\
  \hline
 500 &   -0.1016  & 0.2842 & -0.2996 &  0.3525 \\
 \hline\end{tabular}
 \end{center}

\bigskip

These outcomes  also confirm that the results obtained by the kernel method and by the generalized kernel, the wavelet-based method, are very close.


In a second series of experiments, we used the $t$ distribution  with various degrees of freedom $\nu$ such as  6, 8 and 60,
with the data shifted to have the same mean of $10$ as the normal simulated data before. The performance with respect to bias reduction was
similar to the normal case. We present below the outcomes for the same parameters of the risk measure and some combinations of degrees of freedom and sample sizes. The variances of the $t$-distributed variables are finite and even smaller
than the variance of the normal random variables before. However, the heavier tails
 of the $t$ distribution in combination with the nonlinearity of the transformation defining the risk adversely affect the quality of the limiting normal approximation
  and as a consequence, a bias correction is welcome. It was pleasing to see that  the  same universal  resolution level  $j^*=\log _2 N /5 $ that was used in the normal case did a very good job also in the case of the $t$ distribution. As before, another pleasing aspect of the procedure was that the bias reduction was accompanied by a slight reduction of the variance, as well.

\bigskip

\begin{center}
\begin{tabular}{|c  c| c | c |c |c  |}
 \hline
 N &   df&bias-wavelet & variance-wavelet & bias-plug-in & variance-plug-in\\
  \hline
 100 &  6& -1.6239  & 1.3681 & -2.1477 & 1.4114  \\
  \hline
 200 &  6& -1.2090  & 1.4265 & -1.5892 & 1.4979  \\
  \hline
 500 &  6& -0.5870  & 1.9387 & -0.7453 & 2.1290  \\
  \hline
 100 &  8& -1.0266  & 0.9092 & -1.5532 & 0.9622  \\
  \hline
200 &   8&-0.6814 & 0.9434 & -1.0694 & 1.0175 \\
  \hline
 500 &  8& -0.3029  & 1.0214 & -0.480 &  1.1519 \\
 \hline
 100 &  60& -0.2176  & 0.2182 & -0.7692 & 0.2506  \\
  \hline
200 &   60&-0.0788 & 0.1745 & -0.5058 & 0.2171 \\
  \hline
 500 &  60& 0.0490  & 0.0935 & -0.2092 &  0.1366 \\
 \hline
 \end{tabular}
 \end{center}

\medskip

The results for the kernel estimator are reported in the following tables
\medskip

\begin{center}
\begin{tabular}{ |c  |c | c| c | c |c |c  |}
 \hline
 N & dg &kernel  & bias-kernel & variance-kernel & bias-plug-in & variance-plug-in\\
 \hline
 100 & 6& Uniform & -1.9800   & 1.3440 & -2.1343 & 1.3150   \\
  \hline
200 & 6& Uniform & -1.4528  & 1.5973 & -1.5649 & 1.5886  \\
  \hline
 500 & 6& Uniform & -0.7694   & 1.6350 & -0.7952 &  1.6624   \\
  \hline
  100 & 8&  Uniform & -1.4044   & 1.2057 & -1.5452 & 1.1805  \\
  \hline
 200 & 8&  Uniform & -0.9433   & 1.2299 & -1.0468 & 1.2207  \\
  \hline
 500 & 8&  Uniform &  -0.4875   & 1.0281 & -0.5126 & 1.0460  \\
  \hline
 100 & 60& Uniform & -0.6193   &  0.2529 & -0.7367 &  0.2457 \\
  \hline
 200 & 60& Uniform & -0.3776   &  0.2168 & -0.4642 &  0.2158 \\
 \hline
 500 & 60& Uniform & -0.1513   & 0.1687 & -0.1789 &   0.1768  \\
  \hline
\end{tabular}
\end{center}

\bigskip

\begin{center}
\begin{tabular}{ |c  |c | c| c | c |c |c  |}
 \hline
 N & dg &kernel  & bias-kernel & variance-kernel & bias-plug-in & variance-plug-in\\
 \hline
 100 & 6&  Epanechnikov  & -2.0119    & 1.3370 & -2.1343 & 1.3150   \\
  \hline
200 & 6&  Epanechnikov  & -1.4790   & 1.5954 & -1.5649 & 1.5886  \\
  \hline
 500 & 6&  Epanechnikov  & -0.7782    & 1.6435 & -0.7952 &  1.6624   \\
  \hline
  100 & 8&  Epanechnikov  & -1.4336    & 1.1996 & -1.5452 & 1.1805  \\
  \hline
 200 & 8&   Epanechnikov  & -0.9675    & 1.2299 & -1.0468 & 1.2207  \\
  \hline
 500 & 8&   Epanechnikov  &  -0.4960   & 1.0336 & -0.5126 & 1.0460  \\
  \hline
 100 & 60&  Epanechnikov  & -0.6436    &  0.2510 & -0.7367 &  0.2457 \\
  \hline
 200 & 60&  Epanechnikov  & -0.3979   &  0.2166 & -0.4642 &  0.2158 \\
 \hline
 500 & 60&  Epanechnikov  & -0.1606    & 0.1710 & -0.1789 &   0.1768  \\
  \hline
\end{tabular}
\end{center}

\bigskip




\section{Conclusion}

In a summary, we have suggested a simple and  computationally inexpensive procedure to reduce the slowly diminishing downward  bias that plagues the solutions to modern risk management problems where the risk functional as a rule is a composite one. This procedure is of significant practical interest.
Besides the theoretical backing of the risk reduction effect provided by our smoothed estimators, we have demonstrated the effect by comparing
empirical sample-based optimization of composite functionals and kernel-based composite optimization. We also observed numerically the pleasant effect that the bias reduction is accompanied by a slight reduction of the variance of the risk estimator.

Our theoretical results indicate that, if the goal is to reduce the bias of the empirical estimator ${\vartheta}_E^{(N)}$ by replacing it with a kernel-based $\vartheta_K^{(N)}$ then under mild conditions on the order of the optimal bandwidth for the kernel-based estimator of $\vartheta$  coincides with the order of the optimal bandwidth for the kernel estimator of the density of the data if we assumed that one exists. Similarly, if we choose to use the generalized kernel-based estimator of $\vartheta$  for the same purpose then the order of the optimal resolution level coincides with the order of the optimal resolution level for the generalized kernel estimator of the density of the data.

The development of a parallel theory about the optimal with respect to the chosen kernel bandwidth is beyond the scope of this paper. Including the composition of functions in the search would result in a problem whose level of difficulty is higher than identifying the optimal bandwidth.
We also point out to the discussion in \cite[Section 1.2.4]{tsybakov2008introduction}, which shows that the approach on determining optimal bandwidth for a fixed data density can be seriously criticized.
 For practical purposes, one can choose the bandwidth that is recommended for estimating the data density and plug it in the estimator $\vartheta_K^{(N)}$. Analogously, we can choose the optimal resolution level $j^*$ and plug it in the estimator $\vartheta_w^{(N)}.$

 Many kernels and wavelet-based generalized kernels could be used to this end, with a variety of results presented in Section 7. The wavelet-based procedure has some advantages in that the suggested choice of resolution level seems to be universally valid (and hence more robust) for a large classes of distributions of $X.$ The  use of the other types of kernels requires more careful tailoring of the choice of the kernel bandwidth depending on the (typically unknown) distribution of $X.$ In that sense using these kernels represents a less robust approach in comparison to the generalized wavelet kernel choice.

\end{document}